\definecolor{Red}{rgb}{1,0,0}
\newtheorem{theorem}{Theorem}
\newtheorem{proposition}[theorem]{Proposition}
\newtheorem{cor}[theorem]{Corollary}
\newtheorem{lemma}[theorem]{Lemma}
\newtheorem{obs}[theorem]{Remark}
\newtheorem{assump}[theorem]{Assumptions}
\def \E {\mathbb{E}}
\def \PP {\mathbb{P}}
\def \N  {\mathbb{N}}
\def \R  {\mathbb{R}}
\def \Z  {\mathbb{Z}}
\begin{document}
\title[New examples of ballistic RWRE]{New examples of ballistic RWRE in the low disorder regime}
\author{Alejandro F. Ram\'\i rez$^*$ and Santiago Saglietti$^\dagger$}

\email{($*$) aramirez@mat.uc.cl ($\dagger$) sjs30@nyu.edu}
\address{($*$) 	Facultad de Matem\'aticas, Pontificia Universidad Cat\'olica de Chile\newline
	($\dagger$) Faculty of Industrial Engineering and Management, Technion, Israel and NYU-ECNU Institute of Mathematical Sciences at NYU Shanghai.}

\thanks{A. F. Ram\'\i rez has been
	partially supported by Fondo Nacional de Desarrollo Cient\'ifico y Tecnol\'ogico 1141094 and
	1180259, and Iniciativa Cient\'ifica Milenio. \\
	S. Saglietti has been supported in part at the Technion by a fellowship from the Lady Davis Foundation, the Israeli Science Foundation (ISF) grants no. 1723/14 and 765/18, and by the NYU-ECNU Institute of Mathematical Sciences at NYU Shanghai.}

\keywords{Random walk in random environment, small perturbations of simple random walk, ballistic behavior, concentration inequalities.}
\subjclass[2010]{60K37, 82D30, 82C41.}

\maketitle

\begin{abstract}
	We give a new criterion for ballistic behavior of random walks in random environments which are low disorder perturbations of the simple symmetric random walk on $\Z^d$, for $d\geq 2$. This extends the results from 2003 established by Sznitman in \cite{Sz03} and, in particular, allow us to give new examples of ballistic RWREs in dimension $d=3$ which do not satisfy Kalikow's condition,
	through a new sharp version of Kalikow's criteria. Essentially, this new criterion states that ballisticity occurs whenever the average local drift of the walk is not too small when compared to the standard deviation of the environment. Its proof relies on applying coarse-graining methods together with a variation of the Azuma-Hoeffding concentration inequality in order to verify the fulfillment of a ballisticity condition by Berger, Drewitz and Ram\'irez. 
\end{abstract}

\section{Introduction and Main Results}

\subsection{Introduction} 
The random walk in a random environment
is one of the fundamental models describing the movement of a particle
in disordered media (see \cite{Sz06,Z12} for a comprehensive overview of the model). For walks on $\Z^d$ with $d\ge 2$, few results exist giving explicit
formulas  for basic associated quantities such as the velocity, asymptotic direction or variance, or conditions characterizing specific long-term behavior such as transience/recurrence,  directional transience and ballistic movement.
In particular, it is still a widely open problem to explicitly characterize the (law of the) small disorder necessary to produce ballistic behavior whenever added to the jump probabilities of the simple symmetric random walk, see \cite{Sz03,Sa04}. In this article we focus on this particular question and generalize previously known conditions by exploring the use
of refined concentration inequalities which are variations of the well-known Azuma-Hoeffding inequality (see \cite{BGL13} for an extensive review of general concentration inequalities and their applications). 

In the case of an i.i.d. random environment in dimension $d \geq 3$, Sznitman was able to derive in \cite{Sz03} conditions on the small disorder which guarantee that the perturbed walk is ballistic. Essentially, he showed that as long as the average local drift in some direction $\ell$ of the perturbed random walk is not \textit{too} small with respect to $\epsilon$, the $L^\infty$-norm of the perturbation, one has ballisticity in direction $\ell$ (see Section \ref{sec:main} below for a precise statement). In the present article we improve on this by showing that, in fact, one only needs the average drift to be not too small but compared instead to $\sigma$, the standard deviation of the environment, which is always a smaller quantity than $\epsilon$ (and, furthermore, could potentially be \textit{much} smaller).

As a consequence of this improvement we are able to obtain new examples of RWREs with ballistic behavior. As a matter of fact, to construct these examples it is necessary to show that they satisfy certain
		conditions generally known as {\it ballisticity conditions}. Two important ballisticity conditions are
		Kalikow's condition, introduced in \cite{K81}, which is a requirement on the averaged local drift,
		and the strictly weaker (at least in dimensions $d\ge 3$) condition (T') , introduced by Sznitman in \cite{Sz02}, which is the requirement that the exit probability
		of the random walk from slabs perpendicular to the moving direction of the random walk decay as stretched exponentials.
		The examples we obtain in this article are  basically of two types:
(i) new examples in all dimensions $d \geq 2$ of (small disorder) RWREs satisfying Kalikow's condition for ballisticity; and, most importantly, (ii) new examples in dimension $d=3$ of (small disorder) RWREs satisfying the polynomial ballisticity condition from \cite{BDR14} and hence (T') but \textit{not} Kalikow's condition (the first examples of type (ii) appear in \cite{Sz03} for all dimensions $d \geq 3$). These new three-dimensional examples are of independent interest, since they show that the class of random walks which satisfy (T') but not Kalikow's condition might be much larger and complex than what one would have expected from the previous results of \cite{Sz03}.

We prove our results by refining some of the estimates on \cite{Sz03} and ideas developed in \cite{LRSS}. A novel and key ingredient in our approach is the use of fine concentration inequalities, namely a variation of the well-known Azuma-Hoeffding inequality which gives a suitable control on the size of martingale differences in terms of their conditional variance. 

Before we state our results more precisely and give further details/discussion, let us formally introduce the model and set up the framework to be used throughout the article.

\subsection{The model}

Fix an integer $d \geq 2$ and for each $x=(x_1,\dots,x_d) \in \Z^d$ let $|x|:=|x_1|+\dots+|x_d|$ denote its $\ell^1$-norm. Let $V:=\{x\in\mathbb Z^d: |x|=1\}$ be the set of canonical vectors in $\R^d$ and denote by $\mathcal P$ the set of all probability vectors $\vec{p}=(p(e))_{e \in V}$ on $V$, i.e. $\vec{p} \in [0,1]^V$ such that $\sum_{e\in V}p(e)=1$. In addition, consider the product space $\Omega:=\mathcal P^{\mathbb Z^d}$ 
with its Borel $\sigma$-algebra, denoted by $\mathcal{B}(\Omega)$. We will call any element $\omega=(\omega(x))_{x \in \Z^d} \in \Omega$ an \textit{environment} (on $\Z^d$). For each $x\in\mathbb Z^d$, $\omega(x)$ is a probability vector on $V$, whose components we denote by $\omega(x,e)$, i.e. $\omega(x)=(\omega(x,e))_{e \in V}$. 
The {\it random walk in the environment} $\omega$
starting from $x\in\Z^d$ is then defined as the Markov chain $X=(X_n)_{n \in \N_0}$ on $\Z^d$ which starts from $x$ and is given by the transition probabilities
$$
P_{x,\omega}(X_{n+1}=y+e|X_n=y)=\omega(y,e) \hspace{1cm}\text{ for each }y\in\mathbb Z^d\,,\,e \in V.
$$ 
We denote its law  by $P_{x,\omega}$.
We assume throughout that the space of environments $\Omega$ is
endowed with a probability measure $\PP$, called the \mbox{{\it environmental law}.}
We shall call $P_{x,\omega}$  the {\it quenched law} of the random walk, and also refer to the semi-direct product \mbox{$P_x:= \PP\otimes P_{x,\omega}$} on
$\Omega\times{(\mathbb Z^d)}^{\mathbb N_0}$ given by 
$$
P_x(A \times B) = \int_A P_{x,\omega}(B) d\PP(\omega)
$$
as the {\it averaged} or  {\it annealed law} of the random walk. In general, we will call the sequence $(X_n)_{n \in \N_0}$ under the annealed law a \textit{random walk in a random environment} (RWRE) with \mbox{environmental law $\PP$.} 

\begin{assump}\label{assump}
	Throughout the sequel we shall make the following assumptions on $\PP$:
	\begin{enumerate}
		\item [A1.] The family of random probability vectors $(\omega(x))_{x \in \Z^d}$  is i.i.d. with some common law $\mu$ on $\mathcal{P}$. Equivalently, $\PP$ is the product measure on $\Omega$ with marginal distribution $\mu$.
		\item [A2.] $\PP$-almost surely, every weight $\omega(x)$ is a small perturbation of the weights of the simple symmetric random walk, i.e.
		\begin{equation} \label{eq:perturb}
		\epsilon=\epsilon(\mu):=4d \left\|\omega(0) - \left(\frac{1}{2d},\dots,\frac{1}{2d}\right)\right\|_{L^\infty(\mu)} \in (0,1), 
		\end{equation} where for any random vector $\vec{p}=(p(e))_{e \in V}$ we define its $L^\infty(\mu)$-norm as
		$$
		\|\vec{p}\|_{L^\infty(\mu)}:=\inf \{ M > 0 : |p(e)| \leq M \text{ }\mu\text{-almost surely for all }e \in V\}.
		$$
	\end{enumerate}
	(The $4d$ factor in \eqref{eq:perturb} is merely a more convenient normalization for our purposes.)
\end{assump}

Observe that, by (A1), \eqref{eq:perturb} implies that there exists an event $\Omega_{\epsilon}$ with $\PP(\Omega_{\epsilon})=1$ such that on $\Omega_\epsilon$ one has that
\begin{equation}
\label{epsilon-condition}
\left|\omega(x,e)-\frac{1}{2d}\right|\le
\frac{\epsilon}{4d} \,\text{ for all }x \in \Z^d \text{ and }e \in V.
\end{equation} In particular, $\PP$ is \textit{uniformly elliptic} with ellipticity constant
\begin{equation}
\label{eq:kappa}
\kappa:=\frac{1}{4d},
\end{equation} i.e. $\PP$-almost surely one has that $\omega(x,e)\ge\kappa$ for all $x\in\mathbb Z^d$ and $e\in V$.

The goal of this article is to study transcience/ballisticity properties of $X$ on fixed directions. Recall that, given $\ell\in S^{d-1}$, one says that the random walk $X$ is \textit{transient in direction $\ell$} 
if
$$
\lim_{n\to\infty}X_n\cdot \ell=+\infty\quad P_0-a.s.,
$$
and that it is {\it ballistic in direction $\ell$} if it satisfies the stronger condition
$$
\liminf_{n\to\infty}\frac{X_n\cdot \ell}{n}>0\qquad P_0-a.s.
$$
Any RWRE which is ballistic in some direction $\ell$ satisfies also a law of large numbers (see \cite{DR14}), i.e. there exists a deterministic vector $\vec{v} \in \R^d$ with $\vec{v} \cdot \ell > 0$ such that
$$
\lim_{n \rightarrow +\infty} \frac{X_n}{n} = \vec{v} \qquad P_0-a.s..
$$ This vector $\vec{v}$ is known as the \textit{velocity} of the random walk. 

In the sequel we will fix a certain direction, let us say $e_1:=(1,0,\dots,0) \in S^{d-1}$ for example, and study transience/ballisticity only in this fixed direction. Thus, whenever we speak of transience or ballisticity of $X$ it will be understood that it is with respect to this given direction $e_1$. However, we point out that all of our results can be adapted for any other particular direction. 

\subsection{Main results}\label{sec:main}

For $x \in \Z^d$  define the {\it local drift of the RWRE at site $x$} as the random vector
$$
\vec{d}(x):=\sum_{e\in V}\omega(x,e)e.
$$ and let $\lambda$ denote the \textit{average local drift in direction $e_1$} by the formula
$$
\lambda=\lambda(\mu):= \E( \vec{d}(0) \cdot e_1 ) =  \E(\omega(0,e_1)-\omega(0,-e_1)).
$$ A natural question to ask is whether, under Assumptions \ref{assump}, $X$ is ballistic as soon as $\lambda > 0$ holds. Unfortunately, this is not the case, as one can see from \cite{BSZ03} where examples of random walks with diffusive behavior and non-vanishing $\lambda$ are constructed. However, in \cite{Sz03} it is shown that if $d \geq 3$ and $\lambda$ is not \textit{too} small with respect to $\epsilon$ then the random walk is indeed ballistic and, in fact, satisfies the so-called (T') condition for ballisticity (see Section below \ref{sec:prelim} for further details). The validity of (T') not only implies ballisticity of $X$, but also a CLT and large deviation controls for the sequence $(\frac{X_n}{n})_{n \in \N}$, see \cite{Sz02}. More precisely, in \cite{Sz03} it is shown that, under Assumptions \ref{assump}, given any $\eta \in (0,1)$ there exists some $\epsilon_0=\epsilon_0(\eta,d) \in (0,1)$ such that if $\epsilon \leq \epsilon_0$ 
and 
\begin{equation}\label{eq:sz}
\lambda \geq \begin{cases} \epsilon^{2.5-\eta} & \text{ if }d=3\\
\epsilon^{3-\eta}& \text{ if }d\geq 4\end{cases}
\end{equation} then $X$ satisfies condition (T') in direction $e_1$ and is, thus, ballistic in this direction.

Our goal in this article is to extend the results from \cite{Sz03} and provide new instances of ballistic behavior, by considering the \textit{standard deviation of the environment} $\omega$, defined as  
$$
\sigma =\sigma(\mu):= \sqrt{\sum_{e \in V} \text{Var}_\mu(\omega(0,e))} = \|\widetilde{\delta}(0)\|_{L^2(\mu)},
$$ where, for each $x \in \Z^d$, $\widetilde{\delta}(x):=(\widetilde{\delta}(x,e))_{e \in V}$ denotes the centered vector given by
\begin{equation} \label{eq:deftilde}
\widetilde{\delta}(x,e):=\omega(x,e)-\E(\omega(x,e)).
\end{equation} 

Our first result is an extension of the main result from \cite{Sz03} for the case $d=3$. 

\begin{theorem}
	\label{theorem1} Suppose that $d=3$ and Assumptions \ref{assump} hold. Then, for any $\eta\in (0,1)$ there exists $\epsilon_0=\epsilon_0(\eta)\in (0,1)$ such that if $\epsilon \leq \epsilon_0$ and 
	\begin{equation}
	\label{eq:condrift}
	\lambda \geq \sigma \epsilon^{1.5-\eta}
	\end{equation} then condition (T') is satisfied in direction $e_1$. In particular, $X$ is ballistic in direction $e_1$.
\end{theorem}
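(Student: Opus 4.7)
The plan is to verify the polynomial ballisticity condition $(P)_M$ of Berger--Drewitz--Ram\'irez from \cite{BDR14} for some sufficiently large exponent $M$, since by their main result this condition implies (T$'$) in direction $e_1$ (and hence ballisticity). Concretely, I would work with boxes elongated along $e_1$ of dimensions comparable to $L\times L^\alpha\times L^\alpha$ (with $\alpha\in(0,1)$ to be tuned), and prove that the quenched probability of exiting such a box through any face other than the "front" face decays faster than $L^{-M}$. The size $L$ will eventually be chosen as a small negative power of $\epsilon$ so that the drift accumulated over a typical crossing dominates the fluctuations.

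Following the scheme of Sznitman \cite{Sz03}, I would implement a coarse-graining in which the trajectory is broken into successive exit phases from sub-boxes of an intermediate scale, producing a coarse-grained process of exit points whose displacement in direction $e_1$ I would then control. The key decomposition writes $X_n\cdot e_1-n\lambda$ as the sum of two pieces: a quenched martingale recording the local randomness of each step given the environment, and an "environmental" contribution built from the centered local drifts $\tilde{\delta}(X_k)\cdot e_1$ encountered along the walk, cf.\ \eqref{eq:deftilde}. Both pieces must be shown to be small relative to $n\lambda$ with overwhelming probability.

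The essential technical novelty, which is what allows the replacement of $\epsilon$ by $\sigma$ in \eqref{eq:condrift} compared with \eqref{eq:sz}, is to control the environmental fluctuation by a Bernstein/Freedman-type refinement of Azuma--Hoeffding: for a martingale with increments bounded by $c$ and summed conditional variances bounded by $V$, one has $\P(|M_n|\geq t)\leq 2\exp(-t^2/(2(V+ct/3)))$. Applied to the centered drift sum, the $L^\infty$-bound is of order $\epsilon$ but the conditional second moment of each fresh increment is only $O(\sigma^2)$; this upgrades typical deviations from the Azuma scale $\epsilon\sqrt n$ used in \cite{Sz03} to the sharper scale $\sigma\sqrt n$, and it is precisely this gain that feeds through Sznitman's balance of scales to yield the threshold $\sigma\epsilon^{1.5-\eta}$.

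The main obstacle is that the drifts $\vec d(X_k)$ are not independent: the walk revisits sites and the trajectory depends on the environment, so one cannot apply a concentration inequality directly to the raw sum. To legitimately invoke the variance-based bound with sum of conditional variances $O(\sigma^2)$ times the number of \emph{distinct} sites visited, one must set up the correct filtration and carefully handle self-intersections, using the elongated box geometry to ensure that in a typical crossing the walk visits of order $L$ fresh sites rather than revisiting a bounded neighborhood. Making this compatible with the coarse-grained exit estimates from \cite{Sz03}, as refined in \cite{LRSS}, and keeping track of error terms across all intermediate scales so that the final bound still beats $L^{-M}$, is the delicate part of the argument.
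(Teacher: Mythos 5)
Your high-level strategy (verify the polynomial condition of \cite{BDR14}, and gain the factor $\sigma$ over $\epsilon$ by replacing plain Azuma--Hoeffding with a Bernstein/Freedman-type bound using conditional variances) is indeed the paper's strategy. But as written your proposal has a genuine gap, and it sits exactly where the theorem is hardest: you treat the ``environmental'' term $\sum_k \tilde d(X_k)\cdot e_1$ as an (approximately) centered object to which a concentration inequality should be applied, and you only flag the self-intersection/filtration issue as a technicality about \emph{variances}. Under the annealed law this term is \emph{not} centered: because the walk's trajectory is correlated with the environment (revisits are size-biased by the local weights), the accumulated drift along the path has a systematic bias of order $\sigma^2$ per visited site, modulated by gradients of the Green's function. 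This bias is not a nuisance term --- it is precisely the effect that produces the diffusive examples with $\lambda\neq 0$ of \cite{BSZ03}, and controlling it is where the threshold comes from. In the paper this is the content of Proposition \ref{green-estimates-1}: one works with the Green's operator $G_U[\vec d\cdot e_1](0)$ on a slab of width $\sim\epsilon^{-1}$, expands it perturbatively around the averaged environment (the terms $A,B,C$), bounds the covariance-type term $B$ by $\sigma^2$ times the quantity in \eqref{eq:cota5} (a gradient estimate for the killed SSRW Green's function, giving the factor $\epsilon\log L+\frac1L$), and thereby shows $\E(G_U[\vec d\cdot e_1](0))\geq \frac25 d\lambda L^2$ once $\lambda\gtrsim\sigma^2(\epsilon\log L+\frac1L)$. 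Your proposal contains no analogue of this step, so the claim that ``both pieces must be shown to be small relative to $n\lambda$'' is unsupported for the environmental piece, and the argument would fail there (or, at best, silently requires re-proving exactly this proposition).

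On the fluctuation side your route also differs from the paper's in a way that matters for feasibility. You propose a path-space martingale with a ``fresh site'' filtration and acknowledge that handling revisits is delicate; the paper avoids this entirely by applying the variance-refined inequality (Lemma \ref{lemma:azuma}, via \cite{FGL12}) not along the trajectory but to the environment-exposure (Doob) martingale $F_n=\E\bigl(G_U[\vec d\cdot e_1](0)\mid \omega(x_1),\dots,\omega(x_n)\bigr)$: the influence of resampling one site is bounded by $c\,g_{0,U}(0,x_n)^{1/(2-\alpha)}\|\omega(x_n)-\omega'(x_n)\|_1$, so increments are $O(\epsilon)$ while conditional variances are $O\bigl(g_{0,U}(0,x_n)^{2/(2-\alpha)}\sigma^2\bigr)$, and self-intersections are automatically absorbed into the Green's function, with $\sum_n g_{0,U}(0,x_n)^{2/(2-\alpha)}\sim L^{1+\eta}$ giving the correct balance of scales. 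If you want to salvage a trajectory-based version you would have to both quantify the bias just described and justify the claimed $O(\sigma^2)$ conditional variances at revisited sites, which is substantially harder than the environment-space route. Two further, smaller omissions: the paper needs a case split (for $\sigma<\epsilon^{1.5}$ the slab-scale argument degenerates and one instead invokes Theorem \ref{theorem2}, i.e.\ Kalikow's condition, since then $\lambda\geq\sigma^2\epsilon^{-\eta}$), and the passage from the one-slab estimate to condition (P) uses the renormalization estimates of \cite[Section 4]{Sz03} (the bound \eqref{eq:preboundrhob} on $\E(\sqrt{\rho_B})$) together with boxes whose \emph{transverse} dimension is polynomially larger than $M$, not the narrow $L\times L^{\alpha}\times L^{\alpha}$ boxes you suggest.
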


Note that, since $\sigma \leq \epsilon$ automatically holds by properties of the $L^p$ norms, \eqref{eq:condrift} is indeed weaker than the $\lambda \geq \epsilon^{2.5-\eta}$ condition stated in \cite{Sz03} for $d=3$. Furthermore, as opposed to Sznitman's original result, \eqref{eq:condrift} shows that it is possible to have ballistic behavior for environments with drift $\lambda$ as small as one wants with respect to $\epsilon$, as long as their standard deviation $\sigma$ is accordingly small. Indeed, Theorem \ref{theorem1} suggests that, in order to see ballistic behavior, what is important is that $\lambda$ is not too small but with respect to $\sigma$ rather than $\epsilon$. 

Next, we investigate the validity of Kalikow's condition for ballisticity. We refer to Section \ref{sec:prelim} for a precise definition of this condition, but for now recall the reader that it is in general strictly stronger than (T'), and that it implies slightly stronger results on the sequence $(\frac{X_n}{n})_{n \in \N_0}$, see \cite{Sz01}. We have the following result, which is valid for all dimensions $d \geq 2$.  

\begin{theorem} \label{theorem2} Suppose that $d \geq 2$ and Assumptions \ref{assump} hold. Then, if 
	$$
	\lambda > 4d\sigma^2(1 +9\epsilon),
	$$ Kalikow's condition is satisfied in direction $e_1$. In particular, $X$ is ballistic in direction $e_1$.
\end{theorem}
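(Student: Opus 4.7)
The plan is to verify Kalikow's condition in direction $e_1$ directly, by showing that for every finite connected $U \subset \Z^d$ with $0 \in U$ and every $x \in U$,
\[
\hat d_U(x)\cdot e_1 := \frac{\E[G_U(0,x)\,\vec d(x)\cdot e_1]}{\E[G_U(0,x)]} \geq \lambda - 4d\sigma^2(1+9\epsilon) > 0,
\]
where $G_U(0,x) := E_{0,\omega}\bigl[\sum_{n=0}^{T_U-1}\mathbbm{1}_{\{X_n=x\}}\bigr]$ is the quenched Green function of the walk killed on exiting $U$. Writing $\vec d(x)\cdot e_1 = \lambda + (\tilde\delta(x,e_1)-\tilde\delta(x,-e_1))$, the task reduces to controlling the covariance-type quantity $|\E[G_U(0,x)(\tilde\delta(x,e_1)-\tilde\delta(x,-e_1))]|$ by $4d\sigma^2(1+9\epsilon)\,\E[G_U(0,x)]$. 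Note that the hypothesis of the theorem, combined with $|\lambda| \leq \epsilon/(2d)$ and $\sigma^2 \leq \epsilon^2/(2d)$ from (A2), automatically forces $\epsilon$ to be small (roughly $\epsilon \lesssim 1/(4d)$), a fact we exploit freely.

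The first key step is a structural decomposition of the Green function that disentangles its dependence on $\omega(x)$ from the rest. Setting $h(y) := P_{y,\omega}(T_x < T_U)$ and $a_e := h(x+e)$ for $e \in V$, the strong Markov property at the first hitting time of $x$ together with the geometric distribution of the number of subsequent returns gives
\[
G_U(0,x) = \frac{h(0)}{1 - \sum_{e \in V}\omega(x,e)\,a_e}.
\]
Crucially, the walks defining $h(0)$ and the $a_e$'s are killed on arrival at $x$, so they never use the transition kernel at $x$; consequently $h(0)$ and each $a_e$ are measurable with respect to $\bar\omega(x) := (\omega(y))_{y \neq x}$. By Assumption~(A1), $\omega(x) \perp \bar\omega(x)$, so after conditioning on $\bar\omega(x)$ the problem becomes a finite-dimensional computation with $h(0)$ and the $a_e$'s treated as fixed constants.

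Setting $Y := \sum_e \omega(x,e)\,a_e$, $\bar Y := \sum_e \bar p_e a_e$ (with $\bar p_e := \E\omega(0,e)$), and $Z := Y - \bar Y$, a rewriting using $\sum_e \tilde\delta(x,e) \equiv 0$ gives $Z = -\sum_e (1-a_e)\tilde\delta(x,e)$. Assumption (A2) then gives the uniform bounds $|Z| \leq \tfrac{\epsilon}{2d}\sum_e(1-a_e)$ and $1 - \bar Y \geq \tfrac{2-\epsilon}{4d}\sum_e(1-a_e)$, whence $|Z|/(1-\bar Y) \leq 2\epsilon/(2-\epsilon)$, uniformly in $U,x,\bar\omega(x)$ and $<1$ in our regime. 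The Neumann expansion $(1-Y)^{-1} = (1-\bar Y)^{-1}\sum_{n \geq 0}(Z/(1-\bar Y))^n$ therefore converges absolutely, and since $\tilde\delta(x,e_1)-\tilde\delta(x,-e_1)$ has zero $\omega(x)$-mean, the leading ($n=1$) contribution to the conditional correction equals
\[
-(1-\bar Y)^{-1}\sum_e (1-a_e)\,C_e, \qquad C_e := \mathrm{Cov}(\omega(x,e_1) - \omega(x,-e_1),\,\omega(x,e)).
\]
Introducing probability weights $w_e := (1-a_e)/\sum_f(1-a_f)$, the ellipticity lower bound $\sum_f w_f\bar p_f \geq (2-\epsilon)/(4d)$, Cauchy--Schwarz $|C_e| \leq (\sigma_{e_1}+\sigma_{-e_1})\sigma_e \leq 2\sigma\sigma_e$, and $\sum_e w_e\sigma_e \leq (\sum_e w_e\sigma_e^2)^{1/2}\leq\sigma$ combine to give a leading-order bound of $\tfrac{4d}{2-\epsilon}\cdot 2\sigma^2 = 4d\sigma^2/(1-\epsilon/2) \leq 4d\sigma^2(1+\epsilon)$.

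The higher-order Neumann terms form a geometric series of ratio $\leq 2\epsilon/(2-\epsilon) < 3\epsilon$, and after normalizing by $\E_x[G_U(0,x)] \geq h(0)/(1-\bar Y)$ (Jensen, convexity of $1/(1-\cdot)$) and outer-integrating over $\bar\omega(x)$, they introduce only an overall $O(\epsilon)$ multiplicative correction. Collecting all factors yields $|\hat d_U(x)\cdot e_1 - \lambda| \leq 4d\sigma^2(1+9\epsilon)$ uniformly, establishing Kalikow's condition and hence ballisticity. The main technical obstacle will be the bookkeeping in the final step: verifying that the constant $9$ truly suffices to absorb all the sub-leading geometric-series contributions and denominator expansions simultaneously, with no additional smallness hypothesis on $\epsilon$ beyond that already forced by the theorem's hypothesis.
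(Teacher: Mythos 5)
Your overall architecture is essentially the paper's: the paper also isolates the dependence of the Green's function on $\omega(x)$ via the identity $1/g_U(0,x,\omega)=\sum_e\omega(x,e)f(x,x+e,\omega)$ with the $f$-factors independent of $\omega(x)$ (Kalikow's formula from \cite{LRSS}), expands the reciprocal around its mean, and finds the same leading correction $4d\sigma^2/(1-\tfrac{\epsilon}{2})$ from the covariances $\mathrm{Cov}(\omega(x,e_1)-\omega(x,-e_1),\omega(x,e))$; your Jensen trick for the denominator is a nice shortcut compared with the paper's separate expansion of the denominator. The problem is the step you ``exploit freely'': the claim that the hypothesis $\lambda>4d\sigma^2(1+9\epsilon)$ forces $\epsilon$ to be small is false. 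The inequalities $\lambda\le\epsilon/(2d)$ and $\sigma^2\lesssim\epsilon^2$ go the wrong way: since $\sigma$ can be arbitrarily small compared with $\epsilon$ (take, e.g., an almost deterministic perturbation with $\omega(0,\pm e_1)=\tfrac{1}{2d}\pm\tfrac{\epsilon}{4d}$, so $\sigma\approx0$, $\lambda\approx\epsilon/(2d)>0$, and $\epsilon$ close to $1$), the hypothesis puts no upper bound on $\epsilon$ beyond $\epsilon<1$ from (A2) --- indeed the possibility $\sigma\ll\epsilon$ is the whole point of stating the theorem in terms of $\sigma$, and the statement carries no $\epsilon_0$.

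This matters because your Neumann expansion genuinely breaks down on part of the admissible range: your own uniform bound on the expansion parameter is $|Z|/(1-\bar Y)\le 2\epsilon/(2-\epsilon)$, which reaches $1$ at $\epsilon=2/3$, so the series need not converge for $\epsilon\in[2/3,1)$; and even for $\epsilon<2/3$ the geometric tail contributes a factor of order $\tfrac{2\epsilon}{2-3\epsilon}$, which blows up as $\epsilon\uparrow 2/3$, so the constant $1+9\epsilon$ cannot be recovered uniformly this way. The repair is exactly what the paper does: instead of a full Neumann series, use the exact second-order expansion $\tfrac{1}{a+b}=\tfrac1a-\tfrac{b}{a^2}+\tfrac{b^2}{(a+\xi)^3}$ with Lagrange remainder, and bound the remainder's denominator from below using uniform ellipticity (i.e.\ $1-Y\ge\kappa\sum_e(1-a_e)$, respectively $\omega(x,e)\ge\tfrac{1-\epsilon/2}{1+\epsilon/2}\,\bar p_e$). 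This yields a numerator error of at most $4d\sigma^2\bigl(\tfrac{1}{1-\epsilon/2}+\tfrac{\epsilon}{(1-\epsilon/2)^3}\bigr)$, valid for every $\epsilon\in(0,1)$, and one checks $\tfrac{1}{1-\epsilon/2}+\tfrac{\epsilon}{(1-\epsilon/2)^3}\le 1+9\epsilon$ on all of $(0,1)$ (with equality in the limit $\epsilon\to1$), which is precisely where the constant $9$ comes from. With that substitution your argument closes; as written, the divergence of the expansion for large $\epsilon$ and the unfounded smallness claim constitute a real gap.
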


Note that, whenever $\sigma \ll \epsilon$, Theorem \ref{theorem2} refines \cite[Theorem 2]{LRSS}, where it was shown that Kalikow's condition holds if $\lambda > \frac{1}{d}\epsilon^2$. 

Combining both theorems together with Sznitman's result, we obtain the following corollary which yields the full map of scenarios of ballisticity known so far in this context for all $d \geq 2$. 

\begin{cor} If Assumptions \ref{assump} hold then for any $\eta \in (0,1)$ there exists $\epsilon_0 = \epsilon_0(\eta,d) \in (0,1)$ such that if $\epsilon \leq \epsilon_0$ and  
	$$
	\lambda \geq \begin{cases}
	(4d+\eta)\sigma^2 & \text{ if }d=2 \\
	\min\{ \sigma \epsilon^{1.5-\eta},(4d+\eta)\sigma^2\} & \text{ if }d=3\\
	\min\{\epsilon^{3-\eta},(4d+\eta)\sigma^2\} & \text{ if }d\geq 4
	\end{cases}
	$$ then $X$ satisfies condition (T') in direction $e_1$. In particular, $X$ is ballistic in direction $e_1$. Furthermore, if 
	$$
	\lambda \geq (4d+\eta)\sigma^2
	$$ then in fact $X$ satisfies Kalikow's stronger ballisticity condition.
\end{cor}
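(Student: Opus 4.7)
The plan is to derive the corollary directly by combining Theorem \ref{theorem1}, Theorem \ref{theorem2}, and Sznitman's original result recorded in \eqref{eq:sz}, and then taking $\epsilon_0$ small enough that every hypothesis we invoke is fulfilled. Recall (this is standard background, noted in Section \ref{sec:main}) that Kalikow's condition implies (T'), so once the Kalikow assertion is proved the (T') assertion follows automatically on the branch where $\lambda \geq (4d+\eta)\sigma^2$.

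First I would prove the Kalikow part. Theorem \ref{theorem2} requires $\lambda > 4d\sigma^2(1+9\epsilon)$, while the hypothesis in the corollary is the stronger $\lambda \geq (4d+\eta)\sigma^2$. (If $\sigma = 0$ then $\P$-a.s.\ $\omega(x)$ equals the simple symmetric vector, so $\lambda = 0$ and the assumption is vacuous, hence we may assume $\sigma > 0$.) The inequality $(4d+\eta)\sigma^2 > 4d\sigma^2(1+9\epsilon)$ reduces to $\eta > 36d\epsilon$, which holds as soon as $\epsilon < \eta/(36d)$. So I set
\[
\epsilon_0^{\mathrm{Kal}} := \frac{\eta}{36d}
\]
and apply Theorem \ref{theorem2} whenever $\epsilon \leq \epsilon_0^{\mathrm{Kal}}$ and $\lambda \geq (4d+\eta)\sigma^2$; this yields both the final assertion about Kalikow's condition and, simultaneously, (T') on that branch.

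For the (T') statement I then split by dimension. When $d=2$ the only clause is $\lambda \geq (4d+\eta)\sigma^2$, already covered above. When $d=3$, if $\lambda \geq \sigma\epsilon^{1.5-\eta}$ I invoke Theorem \ref{theorem1} with the same $\eta$, obtaining a threshold $\epsilon_0^{(1)} = \epsilon_0^{(1)}(\eta)$ below which (T') holds; otherwise $\lambda \geq (4d+\eta)\sigma^2$ and the previous paragraph applies. When $d \geq 4$, if $\lambda \geq \epsilon^{3-\eta}$ I invoke Sznitman's estimate \eqref{eq:sz} with the same $\eta$ to obtain a threshold $\epsilon_0^{(2)} = \epsilon_0^{(2)}(\eta,d)$; otherwise I again fall back on Theorem \ref{theorem2}. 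Finally I set
\[
\epsilon_0 := \min\bigl\{\epsilon_0^{\mathrm{Kal}},\, \epsilon_0^{(1)},\, \epsilon_0^{(2)}\bigr\}
\]
(interpreting the missing entries as $1$ depending on $d$), so that every invoked hypothesis is simultaneously satisfied for $\epsilon \leq \epsilon_0$.

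There is no real obstacle here: the corollary is a packaging statement, and the only thing that has to be checked carefully is the elementary chain of inequalities $\lambda \geq (4d+\eta)\sigma^2 > 4d\sigma^2(1+9\epsilon)$ needed to fit the hypothesis of Theorem \ref{theorem2} into the cleaner formulation $(4d+\eta)\sigma^2$, which, as noted, is why the factor $\eta/(36d)$ shows up in $\epsilon_0^{\mathrm{Kal}}$. Everything else is a direct appeal to one of the three results, together with the standard implication Kalikow $\Rightarrow$ (T').
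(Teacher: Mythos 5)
Your proposal is correct and is essentially the paper's own route: the corollary is stated there as a direct packaging of Theorem \ref{theorem1}, Theorem \ref{theorem2} and Sznitman's bound \eqref{eq:sz}, with exactly the case split and the elementary comparison $(4d+\eta)\sigma^2 > 4d\sigma^2(1+9\epsilon)$ for $\epsilon < \eta/(36d)$ that you carry out, plus the standard implication that Kalikow's condition is stronger than (T'). The only blemish is your aside that $\sigma=0$ forces $\omega$ to equal the simple symmetric weights (it only forces a deterministic, possibly asymmetric, environment), but this is harmless: when $\sigma=0$ and $\lambda>0$ Theorem \ref{theorem2} applies directly since $4d\sigma^2(1+9\epsilon)=0$, and the remaining degenerate point $\sigma=0$, $\lambda=0$ is an edge case of the corollary's statement rather than of your argument.
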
 

Our third and last result is a variation of \cite[Theorem 5.1]{Sz03} which gives scenarios under which Kalikow's condition fails to hold. 

\begin{theorem}\label{theo:nokal} Suppose that $d \geq 2$. Then, for any given $\rho \in (0,1]$ there exists $\epsilon_0=\epsilon_0(d,\rho)> 0$ such that if the environment site distribution $\mu$ satisfies the following conditions:
	\begin{enumerate}
		\item [K1.] $\epsilon(\mu) \leq \epsilon_0$,
		\item [K2.] If $d \geq 3$ then $\mu$ is invariant under all rotations which preserve both $\Z^d$ and $e_1$ (respectively, $\mu$ is invariant under the reflection in the $e_2$-axis if $d=2$),
		\item [K3.] $\text{Var}_{\mu}(\omega(0,e_1))=\text{Var}_{\mu}(\omega(0,-e_1))$,
		\item [K4.] $\text{Var}_{\mu}(\omega(0,e_1))-\text{Cov}_{\mu}(\omega(0,e_1),\omega(0,-e_1)) \geq \rho\sigma^2>0$,
		\item [K5.] $\rho \sigma^2 > 32d^2 \lambda \geq 0$,
	\end{enumerate} then Kalikow's condition fails to hold in all directions $\ell \in S^{d-1}$. 
\end{theorem}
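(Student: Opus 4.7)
My plan is to establish, for each direction $\ell \in S^{d-1}$, the existence of a finite connected set $U \ni 0$ and a site $y \in U$ such that the Kalikow auxiliary drift $\hat d_{U, 0}(y) = \E[G_U^{\omega}(0, y)\, \vec d(y)]/\E[G_U^{\omega}(0, y)]$ satisfies $\hat d_{U, 0}(y) \cdot \ell \leq 0$; this immediately precludes Kalikow's condition in direction $\ell$. The case $\ell \cdot e_1 \leq 0$ is handled trivially by the singleton $U = \{0\}$, $y = 0$: since the walk exits $U$ in one step, $G_U^{\omega}(0, 0) = 1$ and $\hat d_{U, 0}(0) = \lambda e_1$, so $\hat d \cdot \ell = \lambda(\ell \cdot e_1) \leq 0$ because $\lambda \geq 0$ by K5. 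Thus only the substantial case $\ell \cdot e_1 > 0$ requires real work.

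For this case I would take $U = B_R$, a large $\ell^{\infty}$-box around the origin, and $y = 0$. Hypothesis K2 implies that the distribution of $\omega|_{B_R}$ is invariant under the group $H$ of isometries of $\Z^d$ fixing $e_1$; since $B_R$ is itself $H$-invariant, the vector $\E[G_U^{\omega}(0, 0)\, \vec d(0)]$ is $H$-invariant and hence lies along $e_1$, so does $\hat d_{U, 0}(0)$. Decomposing $\ell = (\ell \cdot e_1) e_1 + \ell^{\perp}$ gives $\hat d_{U, 0}(0) \cdot \ell = (\hat d_{U, 0}(0) \cdot e_1)(\ell \cdot e_1)$, reducing the task to showing
$$
\hat d_{U, 0}(0) \cdot e_1 \;=\; \lambda + \frac{\text{Cov}(G_U^{\omega}(0, 0),\, \omega(0, e_1) - \omega(0, -e_1))}{\E[G_U^{\omega}(0, 0)]} \;\leq\; 0.
$$
To evaluate the covariance I would use the identity $G_U^{\omega}(0, 0) = (1 - \sum_e \omega(0, e)\, q_e^{\omega})^{-1}$, where $q_e^{\omega}$ is the quenched probability that the walk starting at $e$ returns to $0$ before exiting $U$ and hence depends only on $\omega$ off the origin. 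Conditioning on $\omega(0, \cdot)$ and Taylor-expanding $(1 - p)^{-1}$ around the deterministic simple-random-walk return probability, the linear piece vanishes because isotropy of the simple random walk Green function makes the gradient constant over $e \in V$ combined with the identity $\sum_e \tilde\delta(0, e) = 0$; under K2--K3 together with the sum-to-one constraint $\sum_e \text{Cov}(\omega(0, e), \omega(0, \pm e_1)) = 0$, the perpendicular cross-covariances $\text{Cov}(\omega(0, \pm e_j), \omega(0, e_1) - \omega(0, -e_1))$ also vanish for $j \neq 1$. What remains is a second-order term proportional to $V - C$ carrying a negative sign, whose origin is that a larger $\omega(0, e_1)$ pushes the walk away from the origin and decreases $G_U^{\omega}(0, 0)$; for $R$ large enough and $\epsilon \leq \epsilon_0$ small enough, this yields
$$
\hat d_{U, 0}(0) \cdot e_1 \;\leq\; \lambda - \frac{V - C}{32 d^{2}}\bigl(1 + o(1)\bigr) \;\leq\; \lambda - \frac{\rho \sigma^{2}}{32 d^{2}}\bigl(1 + o(1)\bigr),
$$
using K4, and the strict inequality $\hat d_{U, 0}(0) \cdot e_1 < 0$ follows from K5 after shrinking $\epsilon_0$ to absorb the $o(1)$ error.

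The principal obstacle will be the second-order perturbative computation: one must carefully track which contributions to the Taylor expansion survive the cancellations forced by K2, K3 and the sum-to-one constraint, extract the precise constant $\tfrac{1}{32d^2}$ (reflecting the ellipticity $\kappa = \tfrac{1}{4d}$ together with a combinatorial Green function factor), and control higher-order error terms uniformly in $R$. Dimension $d = 2$ needs particular care because $\E[G_U^{\omega}(0, 0)]$ diverges logarithmically as $R \to \infty$; one has to verify that the negative second-order contribution in the numerator diverges at exactly the same rate so that their ratio stays bounded away from zero and the argument closes.
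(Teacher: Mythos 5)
Your reduction to exhibiting one set $U$ and one point with nonpositive Kalikow drift in direction $\ell$ is fine, and the singleton argument for $\ell\cdot e_1\le 0$ works (K2 kills the perpendicular components of $\E[\vec d(0)]$). The gap is in the substantial case $\ell\cdot e_1>0$: for a \emph{symmetric} box $B_R$ the covariance term is not of order $-(V-C)/(32d^2)$, and the claimed bound cannot be extracted. Carrying out your own expansion: with $G_U(0,0)=\bigl(1-\sum_e\omega(0,e)q_e^\omega\bigr)^{-1}$ and $q_e^\omega$ independent of $\omega(0,\cdot)$, the only contribution proportional to $V-C$ is, to leading order, $(V-C)\,\E\bigl[(q_{e_1}^\omega-q_{-e_1}^\omega)(1-\bar S)^{-2}\bigr]$, where $\bar S$ collects the part of the denominator not depending on $\omega(0,\cdot)$. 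In a domain symmetric under $x_1\mapsto-x_1$ the two return probabilities $q_{e_1}^\omega,q_{-e_1}^\omega$ agree to leading order; their difference has expectation of order $\lambda$ (and of ambiguous sign and higher order when $\lambda=0$), not a negative constant. Your sign heuristic --- that a larger $\omega(0,e_1)$ pushes the walk away from the origin and decreases $G_U(0,0)$ --- is precisely what fails in a symmetric box: stepping towards $+e_1$ is no worse for returning to $0$ than stepping towards $-e_1$. Consequently, for $\lambda>0$ one gets $\hat d_{B_R,0}(0)\cdot e_1=\lambda\bigl(1+O(\sigma^2)\bigr)>0$ rather than $\le \lambda-\frac{\rho\sigma^2}{32d^2}(1+o(1))$; the constant $\frac{1}{32d^2}$ has no source in this geometry, and K1--K5 give no control forcing the prefactor of $V-C$ to be bounded away from $0$ from below in absolute value, let alone negative.

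The order-$\sigma^2$ effect you need requires a domain that is asymmetric in $\pm e_1$, and this is exactly what the paper (following Sznitman's Theorem 5.1) uses: the half-spaces $U_\pm=\{y:\pm\, y\cdot e_1\ge 0\}$ with $y=0$. In $U_-$ the neighbor $e_1$ already lies outside the domain, so $g_{U_-}(0,0)$ couples at order one only to $\omega(0,-e_1)$, and the expansion yields $\E\bigl[g_{U_-}(0,0)\,(\vec d\cdot e_1)(0)\bigr]\le 4d\lambda-\frac{\rho}{4d}\sigma^2+64d^3\epsilon\sigma^2$, strictly negative under K5 once $\epsilon_0$ is small; hence $\vec d_{U_-,0}(0)\cdot e_1<0$, which kills Kalikow's condition for every $\ell$ with $\ell\cdot e_1>0$. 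Symmetrically $U_+$ gives $\vec d_{U_+,0}(0)\cdot e_1>0$, killing $\ell\cdot e_1<0$, and K2 makes the perpendicular components vanish, so directions with $\ell\cdot e_1=0$ fail as well. If you replace your symmetric box by these half-spaces, your perturbative computation does produce the $(V-C)$-term with a definite sign and an order-one prefactor, and the $d=2$ integrability worry disappears since $\E[g_{U_\pm}(0,0)]<\infty$.
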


Theorem \ref{theo:nokal} is important mainly for two reasons. First, it shows that the condition $\lambda \geq c_1(d) \sigma^2$ from Theorem \ref{theorem2} is essentially optimal for the validity of Kalikow's condition, in the sense that there exists $c_2(d)>0$ such that if $\lambda \leq c_2(d)\sigma^2$ then one can already find examples of RWREs which do not satisfy Kalikow's condition. But, most importantly, in combination with Theorem \ref{theorem1} it also shows new examples (apart from those already given in \cite{Sz03}) of ballistic walks for $d=3$ which satisfy (T') but not Kalikow's condition. Indeed, as a direct consequence of Theorems \ref{theorem1}-\ref{theo:nokal}, we obtain the following result.

\begin{cor} Given any $\epsilon_0 > 0$ one can construct a RWRE in dimension $d=3$ verifying Assumptions \ref{assump} and such that:
	\begin{enumerate}
		\item [C1.] $\epsilon \leq \epsilon_0$ and $\lambda \leq \epsilon^{2.5}$ (so that the hypothesis \eqref{eq:sz} from \cite{Sz03} is not satisfied)
		\item [C2.] (T') is verified in direction $e_1$ but Kalikow's condition fails in all directions. 
	\end{enumerate}
\end{cor}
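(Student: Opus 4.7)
The proof reduces to a direct combination of Theorems \ref{theorem1} and \ref{theo:nokal} via an explicit construction. The task is to produce, for arbitrarily small $\epsilon > 0$, an environmental law $\mu$ in dimension $d=3$ whose standard deviation $\sigma$ is polynomially smaller than $\epsilon$, so that the drift $\lambda$ can be slotted into the intersection of the three windows
$$\lambda \leq \epsilon^{2.5},\qquad \lambda \geq \sigma\, \epsilon^{1.5-\eta},\qquad \lambda < \tfrac{\sigma^2}{32\, d^2},$$
dictated respectively by the failure of \eqref{eq:sz}, by the hypothesis of Theorem \ref{theorem1}, and by condition (K5) of Theorem \ref{theo:nokal} with $\rho = 1$. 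Combining these inequalities yields the feasibility window $\sigma \in [\,288\,\epsilon^{1.5-\eta},\, \epsilon^{1+\eta}\,]$, which is non-empty for all $\epsilon$ sufficiently small whenever $\eta < 1/4$. I therefore plan to fix such an $\eta$ and target any scaling $\sigma \asymp \epsilon^{\alpha}$ with $\alpha \in (1+\eta,\, 1.5-\eta)$, together with a compatible $\lambda$ in the resulting interval.

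To realise these scalings I will take $\mu$ to be a sparse, diagonally anti-correlated perturbation concentrated on the $e_1$-axis: set $\omega(0, \pm e_i) = 1/6$ deterministically for $i = 2, 3$, and
$$\omega(0, e_1) = \tfrac{1}{6} + \xi, \qquad \omega(0, -e_1) = \tfrac{1}{6} - \xi,$$
where $\xi$ is supported on $\{-M, 0, M\}$ with $M = \epsilon/12$ and probabilities $\P(\xi = M) = p$, $\P(\xi = -M) = p'$. A direct computation gives $\sigma^2 = 2M^2[(p+p') - (p-p')^2]$ and $\lambda = 2M(p-p')$, so the sum $p + p' \asymp \epsilon^{2(\alpha-1)}$ controls $\sigma$ while a small positive asymmetry $p - p' \ll p + p'$ independently tunes $\lambda$ into the target interval.

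Verification of the hypotheses is then routine: Assumptions \ref{assump} and (K1) hold by $|\xi| \leq \epsilon/12$; (K2) is immediate because the non-$e_1$ coordinates are deterministic and equal to $1/6$, so $\mu$ is invariant under the finite subgroup of $SO(3)$ that preserves $\Z^3$ and $e_1$; (K3) follows from $\omega(0, \pm e_1) = 1/6 \pm \xi$; and since $\text{Cov}(\omega(0,e_1), \omega(0,-e_1)) = -\text{Var}(\xi)$, we obtain $\text{Var}(\omega(0,e_1)) - \text{Cov}(\omega(0,e_1), \omega(0,-e_1)) = 2\,\text{Var}(\xi) = \sigma^2$, so (K4) holds with $\rho = 1$. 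Condition (K5) and the hypothesis of Theorem \ref{theorem1} hold by construction. Theorem \ref{theorem1} then delivers (T') in direction $e_1$, Theorem \ref{theo:nokal} delivers the failure of Kalikow's condition in every direction, and $\lambda \leq \epsilon^{2.5}$ by design. The only real obstacle is the preliminary polynomial juggling to exhibit a non-empty feasibility window, which forces $\eta < 1/4$ but is otherwise elementary.
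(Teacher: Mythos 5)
Your proposal is correct and follows the same strategy as the paper: combine Theorem \ref{theorem1} with Theorem \ref{theo:nokal} by engineering $\sigma$ to lie polynomially between $\epsilon^{1.5-\eta}$ and $\epsilon^{1+\eta}$, and then placing $\lambda$ in the nonempty intersection of the three windows $\lambda\le\epsilon^{2.5}$, $\lambda\ge\sigma\epsilon^{1.5-\eta}$ and $32d^2\lambda<\rho\sigma^2$ (the paper sketches exactly this and omits the details). Your concrete $\mu$ differs from the paper's sketched example: the paper takes an isotropic random vector $\hat\mu$ plus a deterministic drift $\tfrac{\lambda}{2}e\cdot e_1$, whereas you randomize only the $\pm e_1$ coordinates via an asymmetric three-point variable $\xi$, getting the drift from the asymmetry $p-p'$; this is legitimate because K2 only demands invariance under lattice rotations fixing $e_1$, not full isotropy, and your computations of $\lambda=2M(p-p')$, $\sigma^2=2\mathrm{Var}(\xi)$ and of K3--K4 (with $\rho=1$, since $\mathrm{Cov}(\omega(0,e_1),\omega(0,-e_1))=-\mathrm{Var}(\xi)$) are all correct. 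Your bookkeeping requiring $\eta<\tfrac14$ is the right constraint for the window $(32d^2\epsilon^{1.5-\eta},\epsilon^{1+\eta}]$ to be nonempty, and since the corollary only asks for one example, fixing such an $\eta$ suffices.
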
 

One such example can be constructed as in \cite{Sz03} by first fixing $\rho \in (0,1]$ and then setting $\mu$ to be the law of the random probability weight $\omega(0)$ on $\mathcal{P}$ given by 
$$
\omega(0,e) = p(e) + \frac{\lambda}{2} e \cdot e_1 \hspace{1cm}\text{ for each $e \in V$,}
$$ for some constant $\lambda > 0$ and random probability vector $\vec{p}=(p(e))_{e \in V}$ to be specified later. It is simple to check that, if the law $\widehat{\mu}$ of $\vec{p}$ satisfies:
	\begin{enumerate}
		\item [$\bullet$] $\widehat{\mu}$ is isotropic, i.e. invariant under rotations of $\R^3$ which preserve $\Z^3$,
		\item [$\bullet$] $\text{Var}_{\widehat{\mu}}(p(e_1))-\text{Cov}_{\widehat{\mu}}(p(e_1),p(-e_1)) \geq \rho \sigma(\widehat{\mu})>0$,
	\end{enumerate} then for any $\eta \in (0,\frac{1}{2})$ one can choose constants $k_1,k_2,k_3>0$ (depending only on $\eta, \epsilon_0$ and $\rho$) in such a way if $\widehat{\mu}$ is taken to also satisfy
$$
\epsilon(\widehat{\mu}) \leq k_1 \epsilon_0 \hspace{1cm}\text{ and }\hspace{1cm}k_2 (\epsilon(\widehat{\mu}))^{1.5-\eta} \leq \sigma(\widehat{\mu}) \leq k_3 (\epsilon(\widehat{\mu}))^{1+\eta}
$$ then there exists a nonempty interval of values of $\lambda$ for which the associated walk satisfies (C1) and the hypotheses of both Theorems \ref{theorem1} and \ref{theo:nokal}, so that (C2) also holds. We omit the details.

The proof of Theorem \ref{theorem1} is an adaptation of the approach developed in \cite{Sz03}, which consists of verifying the effective criterion given in \cite{Sz02} for the validity of (T'). Instead of this criterion, we will verify that the polynomial condition from \cite{BDR14} holds, which is equivalent to (T') and more convenient for our purposes. Crucial to this verification are estimates on the average value and size of fluctuations of the Green's operator of the RWRE killed upon exiting a slab of diameter proportional to $\epsilon^{-1}$, for $\epsilon$ as in \eqref{eq:perturb}. In \cite{Sz03} it is shown that these quantities can be suitably controlled by $\epsilon$, that is, by the $L^\infty$-norm of the perturbation. However, to obtain our results we will need to show that these can still be \mbox{controlled by $\sigma$,} the $L^2$-norm of the perturbed environment, which can be, in principle, a much smaller quantity. This task requires redoing some of the estimates from \cite{Sz03} but now in $L^2$ (and thus refining them from their original $L^\infty$-form), and introducing new tools to the analysis, like the generalized version of Azuma-Hoeffding's inequality used in Lemma \ref{lemma:azuma}. 
The proof of Theorem \ref{theorem2} is inspired by that of \cite[Theorem 2]{LRSS}  and is based
on the version of Kalikow's formula proved in \cite{LRSS} together with a careful application of Kalikow's criteria for ballisticity. Finally, Theorem \ref{theo:nokal} follows from adapting the argument in \cite[Theorem 5.1]{Sz03} to the $L^2$-setting.

Finally, we mention that the ideas here can also be used to extend the bounds on the velocity developed in \cite{LRSS}. Indeed, the same approach used here goes through in \cite{LRSS} to show that:
\begin{enumerate}
	\item [B1.] In dimension $d=3$, given two quantities $\delta < \eta \in (0,1)$ there exists $\epsilon_0 \in (0,1)$ and $c_0 > 0$ depending on $\delta$ and $\eta$ such that if $\epsilon \leq \epsilon_0$ and $\lambda \geq \sigma \epsilon^{1.5-\eta}$ then $X$ satisfies (T') in direction $e_1$ and is thus ballistic in direction $e_1$ with a velocity $\vec{v}$ satisfying
	$$
	0 <	\vec{v} \cdot e_1 \leq \lambda + c_0 \sigma \epsilon^{1.5-\delta}.
	$$ 
	\item [B2.] For all dimensions $d \geq 2$, given any $\eta \in (0,1)$ there exists $\epsilon_0 = \epsilon_0(d,\eta) \in (0,1)$ such that if $\epsilon \leq \epsilon_0$ and $\lambda > (4d+\eta)\sigma^2$ then $X$ satisfies Kalikow's condition in direction $e_1$ and is this ballistic in direction $e_1$ with a velocity $\vec{v}$ satisfying 
	$$
	|\vec{v} - \lambda| \leq (4d+\eta) \sigma^2.
	$$
\end{enumerate}
However, since this extension of the results \cite{LRSS} is completely analogous to the one of \cite{Sz03} we shall do here, we will omit the details of how to prove (B1-B2). The reader interested in a proof will know how to proceed from \cite{LRSS} after reading Sections 3 and 4 below.  

The remainder of the article is organized as follows. In Section 2 we introduce  general notation and
establish a few preliminary facts about the RWRE model, including the precise definitions of Kalikow's and (T') conditions for ballisticity. Afterwards, Sections 3, 4 and 5 are each devoted to the proofs of Theorems \ref{theorem1}, \ref{theorem2} and \ref{theo:nokal}, respectively.

\section{Preliminaries}\label{sec:prelim}

In this section we introduce the general notation to be used throughout the article, as well as review some preliminary notions about RWREs we will require for the proofs.

\subsection{General notation}\label{sec:GN}
Given any subset $B\subset\mathbb Z^d$, we define
its (outer) boundary as
$$
\partial B:=\{x \in \Z^d - B : |x-y|=1 \text{ for some }y \in B\}
$$
and the first exit
time of the random walk from $B$ as
\begin{equation}\label{eq:deftb}
T_B:=\inf\{n\ge 0: X_n\notin B\}.
\end{equation} Similarly, for $x \in \Z^d$ we define the hitting time $H_x$ and first return time $\widetilde{H}_x$ of $x$ respectively as
\begin{equation}\label{eq:defhs}
H_x:=\inf\{n \geq 0 : X_n = x\}\hspace{1cm}\text{ and }\hspace{1cm}\widetilde{H}_x:=\inf \{n \geq 1 : X_n = x\}.
\end{equation} Furthermore, for $L \in \N$ we define the $L$-slab in direction $e_1$ as
\begin{equation}\label{eq:slab}
U:=\left\{y\in\mathbb Z^d: -L \leq y \cdot e_1 < L
\right\}, 
\end{equation} where we consciously omit the dependence on $L$ from the notation $U$ for simplicity. Finally, for each $M \in \N$ we also define the box
\begin{equation}
\label{beeme}
B_{M}:=\left\{y\in\mathbb Z^d: - \frac{M}{2} < y \cdot e_1 < M \text{ and }
|y\cdot e_i| < 25M^3 \text{ for } 2\le i\le d\right\}
\end{equation}
together with its {\it frontal side}
$$
\partial_+B_M:=\left\{y\in \partial B_{M}: y\cdot e_1 \geq M\right\},
$$ and its \textit{middle-frontal part}
$$
B^*_M:= \left\{ y \in B_{M} : \frac{M}{2} \leq y \cdot e_1 < M\,,\,|y \cdot e_i| < M^3 \text{ for }2 \leq i \leq d\right\}.
$$ 

\subsection{Green's functions and operators}

Let us now introduce some notation we shall use related to the Green's functions of the RWRE and of the simple symmetric random walk (SSRW). 

Given a subset $B \subseteq \Z^d$, the Green's functions of the RWRE and SSRW killed upon exiting $B$ are respectively defined for $x,y \in B \cup \partial B$ as
$$
g_B(x,y,\omega):= E_{x,\omega}\left(\sum_{n=0}^{T_{B}} \mathbbm{1}_{\{X_n=y\}}\right)\hspace{1cm}\text{ and }\hspace{1cm}g_{0,B}(x,y):=g_{B}(x,y,\omega_0),
$$ where $\omega_0$ is the corresponding weight of the SSRW, given for all $x \in \Z^d$ and $e \in V$ by
$$
\omega_0(x,e)=\frac{1}{2d}.
$$ Furthermore, if $\omega \in \Omega$ is such that $E_{x,\omega}(T_B) < +\infty$ for all $x \in B$, we can define the corresponding Green's operator on $L^\infty(B)$ by the formula
$$
G_B[f](x,\omega):= \sum_{y \in B} g_B(x,y,\omega)f(y) = E_{x,\omega}\left( \sum_{n=0}^{T_B-1} f(X_n)\right).
$$ Notice that $g_B$, and therefore also $G_B$, depends on $\omega$ only though its restriction $\omega|_B$ to $B$. Finally, it is straightforward to check that if $U$ is the slab defined in \eqref{eq:slab} then both $g_{U}$ and $G_{U}$ are well-defined for all environments $\omega \in \Omega_\epsilon$, where $\Omega_\epsilon$ is the full $\PP$-probability event under which \eqref{epsilon-condition} holds.

\subsection{Ballisticity conditions} \label{sec:bal}
We now recall the two conditions for ballisticity we shall work with: condition (T'), originally introduced by Sznitman in \cite{Sz02}, and Kalikow's condition from \cite{K81}. For simplicity, instead of giving the original formulation of (T') by Sznitman, we will state here the polynomial condition (P) for ballisticity, which was shown in \cite{BDR14} to be equivalent to (T') (and, recently in \cite{GR18}, to
		condition (T), which is the requirement that the exit probability from slabs along the opposite side of the direction of
		movement of the random walk, decays exponentially fast). The interested reader is invited to consult a more detailed exposition about such conditions in \cite{LRSS}. For simplicity, we consider only ballisticity in direction $e_1$.

\medskip
\noindent \textbf{Condition (P)}. We will say that condition (P) is satisfied (in direction $e_1$) if there exists $M \geq M_0$ such that
\begin{equation}
\label{eq:condp}
\sup_{x \in B_M^*} P_x\left( X_{T_{B_M}} \notin \partial_+ B_M\right) \leq \frac{1}{M^{K}},
\end{equation} for some $K>15d+5$, where
$$
M_0:= \exp\{100 + 4d(\log \kappa)^2\}
$$ and $\kappa$ is the uniform ellipticity constant of the RWRE, which in our case can be taken to be $\kappa=\frac{1}{4d}$. We mention once again that in \cite{BDR14} it was shown that (P) is equivalent to (T').

\medskip
\noindent \textbf{Introducing Kalikow's walk}. Given a nonempty connected strict subset $B \subsetneq \Z^d$, for each $x \in B$ we define \textit{Kalikow's walk} on $B$ (starting from $x$) as the random walk starting from $x$ which is killed upon exiting $B$ and has transition probabilities determined by the environment $\omega_B^x \in \mathcal{P}^B$ given by 
\begin{equation}
\label{eq:defkalb}
\omega_B^x(y,e):=\frac{ \E( g_B(x,y,\omega)\omega(y,e))}{\E(g_B(x,y,\omega))}.
\end{equation} It is straightforward to check that by the uniform ellipticity of $\PP$ we have 
\begin{equation} \label{eq:gwd}
0< \E(g_B(x,y,\omega)) <+\infty \hspace{1cm}\text{for all $y \in B$,}
\end{equation}
so that the environment $\omega_B^x$ is well-defined. In accordance with our present notation, we will denote the law of Kalikow's walk on $B$ by $P_{x,\omega_B^x}$ and its Green's function by $g_B(x,\cdot,\omega_B^x)$. Given a direction $\ell \in S^{d-1}$, we will say that Kalikow's condition holds (in direction $\ell$) if 
\begin{equation} \label{defepsK}
\varepsilon_K(\ell):=\inf\{ \vec{d}_{B,0}(y) \cdot \ell : B \subsetneq \Z^d \text{ connected with }0,y \in B\} > 0,
\end{equation} where $\vec{d}_{B,0}$ denotes the drift of Kalikow's walk in $B$ at $0$ defined as
$$
\vec{d}_{B,0}(y) := \sum_{e \in V} \omega_B^0(y,e) e.
$$ If Kalikow's condition holds in some direction $\ell \in S^{d-1}$ then the walk $X$ is ballistic in direction $\ell$, see \cite{LRSS} for further details. Furthermore, in \cite{Sz02} it is shown that Kalikow's condition implies (T'). 

\section{Proof of Theorem \ref{theorem1}}

We will divide the proof of Theorem \ref{theorem1} into three parts, each carried out in a separate subsection. Throughout this section we shall assume that the environmental law $\PP$ satisfies Assumptions \ref{assump}. 

\subsection{Lower bound on $\E(G_U[\vec{d}\cdot e_1](0))$}

The first step is to give a lower bound on $\E(G_U[\vec{d}\cdot e_1](0))$, the expectation of Green's operator on the local drift at $0$. The precise bound we need is contained in the following proposition, which is a generalization of \cite[Proposition 3.1]{Sz03}.

\begin{proposition}
	\label{green-estimates-1} Suppose that $d\ge 3$ and Assumptions \ref{assump} hold. Then, there exist constants $c_1,c_2>0$ depending only on $d$ such that if $\epsilon \leq \frac{1}{8d}$ then for any integer $L \geq 2$ satisfying $\epsilon L < c_1$ one has that 
	\begin{equation} \label{eq:boundexp}
	\E(G_U[\vec{d}\cdot e_1](0)) \geq \frac{2}{5}d \lambda L^2
	\end{equation} provided that 
	\begin{equation} \label{eq:lambda1}
	\lambda \geq c_2 \sigma^2 \left(\epsilon \log L + \frac{1}{L}\right).
	\end{equation}
\end{proposition}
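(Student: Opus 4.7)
The plan is to decompose
\[
\E\bigl(G_U[\vec{d}\cdot e_1](0,\omega)\bigr) = \lambda\,\E\bigl(E_{0,\omega}(T_U)\bigr) + \E\bigl(G_U[\tilde{d}](0,\omega)\bigr),
\]
where $\tilde{d}(y):=\vec{d}(y)\cdot e_1-\lambda = \tilde{\delta}(y,e_1)-\tilde{\delta}(y,-e_1)$ is the centered local drift in direction $e_1$. The first summand is the main positive contribution of order $d\lambda L^2$, while the second is an error term whose centered structure should make it small. The claim will follow if, under \eqref{eq:lambda1}, the error is at most a small constant multiple of $d\lambda L^2$.

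For the main term, I would exploit that, for any $\omega\in\Omega_\epsilon$, the projection $Y_n:=X_n\cdot e_1$ performs a lazy walk on $\Z$ whose $\pm 1$ transition probabilities differ from the SSRW's $\tfrac{1}{2d}$ by at most $\tfrac{\epsilon}{4d}$. Applying optional stopping at $T_U$ to $Y_n^2$ minus its compensator, using $Y_{T_U}^2\ge L^2$, and absorbing the drift contribution via $\epsilon L<c_1$, one obtains
\[
\E\bigl(E_{0,\omega}(T_U)\bigr)\ge \frac{dL^2}{1+c\,\epsilon}\ge \tfrac{d}{2}\,L^2
\]
uniformly in $\omega$ (in fact arbitrarily close to $dL^2$ by shrinking $c_1$). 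Hence $\lambda\,\E(E_{0,\omega}(T_U))\ge \tfrac{d}{2}\lambda L^2$.

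For the error term, I would expand $g_U(0,y,\omega)$ around the (deterministic) Green's function $\bar{g}_U$ of the averaged walk with weights $\bar{\omega}(x,e):=\E(\omega(x,e))$, via the resolvent identity $g_U=\bar{g}_U+\bar{g}_U\,\tilde{L}\,g_U$ with $\tilde{L}f(z):=\sum_{e\in V}\tilde{\delta}(z,e)f(z+e)$, and iterate. The zeroth order is deterministic and integrates against $\E(\tilde{d}(y))=0$ to give zero. The first-order contribution
\[
\E\Bigl(\sum_{y,z,e}\bar{g}_U(0,z)\,\tilde{\delta}(z,e)\,\bar{g}_U(z+e,y)\,\tilde{d}(y)\Bigr)
\]
is nonzero only when $z=y$ by Assumption A1 (otherwise the two factors of $\tilde{\delta}$ are independent of each other and of the deterministic Green's functions), producing a bound of the form $C\sigma^2\sum_{y\in U}\bar{g}_U(0,y)\,\bar{g}_U(y,y)$. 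For $d=3$, $\bar{g}_U(y,y)=O(1)$ uniformly in $y$, and an $L^2$-refinement of Sznitman's slab Green's function estimates, tracking both the logarithmic bulk behavior and the boundary decay in the $e_1$-direction, yields
\[
\bigl|\E\bigl(G_U[\tilde{d}](0,\omega)\bigr)\bigr|\le C\sigma^2\,L^2\bigl(\epsilon\log L + L^{-1}\bigr).
\]
The higher-order terms contribute additional powers of $\sigma$ times Green's operator norms controlled by $\epsilon L<c_1$ and are absorbed into this bound.

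Combining, under \eqref{eq:lambda1} with $c_2$ large enough (depending only on $d$), the error is at most $\tfrac{d}{10}\lambda L^2$, so that
\[
\E\bigl(G_U[\vec{d}\cdot e_1](0,\omega)\bigr)\ge \tfrac{d}{2}\lambda L^2 - \tfrac{d}{10}\lambda L^2 = \tfrac{2d}{5}\lambda L^2,
\]
as claimed. The hard part is the $L^2$-refinement of the slab Green's function estimate: in \cite{Sz03} the analogous bound involves $\epsilon^2$ via the pointwise $L^\infty$ bound $\|\tilde{\delta}\|_\infty\le\epsilon/(2d)$, and replacing it by $\sigma^2$ requires ensuring that every appearance of $\tilde{\delta}$ in the expansion is paired with another one at the same site so that only second moments appear. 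This is precisely what the expansion around $\bar{g}_U$ (rather than around $g_{0,U}$) achieves, combined with the $x$-independence provided by Assumption A1.
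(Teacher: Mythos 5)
Your overall architecture (isolating the main term $\lambda\,\E(E_{0,\omega}(T_U))\approx d\lambda L^2$ and showing that the centered error $\E(G_U[\tilde d\cdot e_1](0))$ is of order $\sigma^2(\epsilon\log L+L^{-1})L^2$) is the same as the paper's, and your optional-stopping treatment of the main term is fine. The genuine gap is in the error term. After restricting to the diagonal $z=y$, your first-order bound is $C\sigma^2\sum_{y\in U}\bar g_U(0,y)\,\bar g_U(y+e,y)$, which you then claim can be improved to $C\sigma^2 L^2(\epsilon\log L+L^{-1})$ by refining slab Green's function estimates. This cannot work: in $d=3$ one has $\bar g_U(y+e,y)\asymp\bar g_U(y,y)\asymp 1$ and $\sum_y\bar g_U(0,y)=\bar E_0(T_U)\asymp dL^2$, so the quantity you wrote is of order $\sigma^2L^2$, and no pointwise refinement recovers the factor $\epsilon\log L+L^{-1}$, because you have already discarded the cancellation that produces it. The missing ingredient is $\sum_{e\in V}\tilde\delta(y,e)=0$ (equivalently $\sum_e\E\bigl((\tilde d(y)\cdot e_1)\tilde\delta(y,e)\bigr)=0$), which allows one to replace $\bar g_U(y+e,y)$ by the discrete gradient $\bar g_U(y+e,y)-\bar g_U(y+e_1,y)$ \emph{before} taking absolute values; the small factor then comes from the gradient-sum estimate $\sum_{y}g_U(0,y)|g_U(y+e,y)-g_U(y+e_1,y)|\le c(\epsilon\log L+L^{-1})L^2$ (the paper's \eqref{eq:cota5}, borrowed from Sznitman), while the covariance bound $|\E((\tilde d(y)\cdot e_1)\tilde\delta(y,e))|\le 2\sigma^2$ supplies the $\sigma^2$. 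As written, your argument only yields the proposition under the much stronger hypothesis $\lambda\gtrsim\sigma^2$, not under \eqref{eq:lambda1}.

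There is a second, related gap: the higher-order terms of your global resolvent expansion are not ``absorbed'' for free. The natural expansion parameter is $\sup_x\sum_y\bar g_U(x,y)$ times the size of $\tilde L$, i.e.\ of order $\epsilon L^2$, which is large in the regime $L\asymp\epsilon^{-1}$ where the proposition is applied, so the series does not converge by a norm bound and each order would again need pairing and cancellation arguments. The paper sidesteps both problems by following Sznitman's single-site substitution: for each $x$ one compares $\omega$ with the environment equal to $\omega$ off $x$ and to $\E(\omega(x,\cdot))$ at $x$, giving the exact finite decomposition $G_U[\tilde d\cdot e_1](0)=A+B+C$ with $\E(A)=0$; the first-order term $B$ is handled exactly by the cancellation and gradient estimate described above, and the remainder $C$ is quadratic in $\tilde\delta(x,\cdot)$ at a single site, so that $\E|C|\le C\epsilon\sigma^2L^2$, absorbed by the $\epsilon\log L$ part of \eqref{eq:lambda1}. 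To salvage your route you would need to implement the $\sum_e\tilde\delta=0$ cancellation at first order, prove the gradient-sum estimate for $\bar g_U$, and give an actual argument controlling the sum of all higher-order terms rather than a one-line appeal to $\epsilon L<c_1$.
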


\begin{obs} Since $\sigma \leq \epsilon$ by standard properties of the $L^p$ norms, Proposition \ref{green-estimates-1} is indeed a generalization of \cite[Proposition 3.1]{Sz03} because the required lower bound on the drift $\lambda$ in \eqref{eq:lambda1} is now smaller. 
\end{obs}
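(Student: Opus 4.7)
The observation consists of two assertions: first, that $\sigma \le \epsilon$; and second, that, as a consequence, the drift lower bound in \eqref{eq:lambda1} is weaker than its counterpart in \cite[Proposition 3.1]{Sz03}, so that Proposition \ref{green-estimates-1} genuinely generalizes the latter. I will address these in turn.

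For the inequality $\sigma \le \epsilon$, the plan is to combine two elementary facts. First, on a probability space the nesting $\|X\|_{L^2(\mu)} \le \|X\|_{L^\infty(\mu)}$ holds by Jensen's inequality. Second, by its very definition the variance minimizes $c \mapsto \E[(X-c)^2]$, so $\text{Var}_\mu(\omega(0,e)) \le \E[(\omega(0,e) - \tfrac{1}{2d})^2]$. Bounding this $L^2$ quantity by the corresponding $L^\infty$ quantity and invoking the definition \eqref{eq:perturb} of $\epsilon$ would give $\text{Var}_\mu(\omega(0,e)) \le (\epsilon/(4d))^2$ for each $e \in V$. Summing over the $2d$ elements of $V$ then yields $\sigma^2 \le \epsilon^2/(8d) \le \epsilon^2$, and hence $\sigma \le \epsilon$.

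The second assertion is purely a comparison of hypotheses: the condition \eqref{eq:lambda1} differs from the analogous condition in \cite[Proposition 3.1]{Sz03} only in that a prefactor $\epsilon^2$ has been replaced by $\sigma^2$. Since $\sigma^2 \le \epsilon^2$ by what we just established, the quantity $c_2 \sigma^2(\epsilon \log L + 1/L)$ is no larger than its $\epsilon^2$-counterpart, so any $\lambda$ meeting Sznitman's original hypothesis automatically meets \eqref{eq:lambda1}. Consequently Proposition \ref{green-estimates-1} delivers the conclusion \eqref{eq:boundexp} under a strictly weaker assumption on $\lambda$, which is exactly the content of the remark.

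The only subtlety worth flagging is that this comparison implicitly requires the constants $c_1, c_2$ in Proposition \ref{green-estimates-1} to be compatible with (i.e.\ no worse than) those appearing in \cite[Proposition 3.1]{Sz03}; but since Proposition \ref{green-estimates-1} is proved as an $L^2$-refinement of Sznitman's estimate inheriting its structural bounds, this compatibility is built into the argument and no additional work is required.
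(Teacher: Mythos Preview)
Your argument is correct and is essentially a careful unpacking of what the paper asserts by the phrase ``by standard properties of the $L^p$ norms''; the paper does not supply any further justification for this remark, and your computation showing $\sigma^2 \le 2d\,(\epsilon/(4d))^2 = \epsilon^2/(8d)$ (via the variance-minimization property and the $L^2\le L^\infty$ inequality) is exactly the intended content. Your closing caveat about the constants $c_1,c_2$ is well placed but, as you note, is absorbed by the fact that the proof of Proposition~\ref{green-estimates-1} refines rather than replaces Sznitman's estimates.
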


To prove Proposition \ref{green-estimates-1}, we will follow the approach used in the proof of \cite[Proposition 3.1]{Sz03}, albeit with some modifications. To begin, for $x \in \Z^d$, $e \in V$ and $\omega \in \Omega_{\epsilon}$ let us set 
\begin{equation}\label{eq:deftildes}
\widetilde{\delta}(x,e) := \omega(x,e) - \E(\omega(x,e)) \hspace{1cm}\text{ and }\hspace{1cm}\widetilde{d}(x,\omega):=\vec{d}(x,\omega) - \E(\vec{d}(x,\cdot)).
\end{equation} Observe that, if $\mathbbm{1}$ denotes the function constantly equal to $1$ on $\Z^d$, since by definition of $\widetilde{d}$ one has
$$
G_U[\vec{d}\cdot e_1](0) = \lambda G_U[\mathbbm{1}](0)+G_U[\widetilde{d}\cdot e_1](0)
$$ and, moreover, $G_U[\mathbbm{1}](0)\geq \frac{4}{5}dL^2$ by \cite[display (3.7)]{Sz03} (the constant $c_1$ appearing in (3.7) of \cite{Sz03} equals $d$ by \cite[display (2.10)]{Sz03}), we see that \eqref{eq:boundexp} will follow if we show that whenever \eqref{eq:lambda1} holds one has 
$$
|\E(G_U[\widetilde{d}\cdot e_1](0))| \leq \frac{2}{5}d\lambda L^2.
$$ Now, a standard Markov chain calculation yields the formula 
\begin{equation}\label{eq:formg}
G_U[\widetilde{d}\cdot e_1](0)= \sum_{x \in U} \frac{P_{0,\omega}(H_x < T_U)}{P_{x,\omega}(\widetilde{H}_x > T_U)}(\widetilde{d}(x)\cdot e_1)
\end{equation} where $T_U$, $H_x$ and $\widetilde{H}_x$ are as in \eqref{eq:deftb} and \eqref{eq:defhs}, respectively. Using the decomposition 
$$
P_{x,\omega}(\widetilde{H}_x > T_U)= P_{x,\bar{\omega}_x}(\widetilde{H}_x > T_U) + \sum_{e \in V} \widetilde{\delta}(x,e) P_{x+e,\omega}(H_x > T_U)
$$ with $\bar{\omega}_x \in \Omega_{\epsilon}$ the environment given by the formula
$$
\bar{\omega}_x(y,e):=\left\{\begin{array}{ll} \omega(y,e) & \text{ if }y \neq x \\ \\ \E(\omega(x,e)) & \text{ if }y=x,\end{array}\right.
$$ we can rewrite \eqref{eq:formg} as 
$$
G_U[\widetilde{d}\cdot e_1](0)= \sum_{x \in U} \frac{P_{0,\omega}(H_x < T_U)}{P_{x,\bar{\omega}_x}(\widetilde{H}_x > T_U)}(\widetilde{d}(x)\cdot e_1) \left( 1 + \sum_{e \in V} \widetilde{\delta}(x,e) \frac{P_{x+e,\omega}(H_x > T_U)}{P_{x,\bar{\omega}_x}(\widetilde{H}_x > T_U)}\right)^{-1}.
$$ But, since for all $e \in V$ we have the inequality
\begin{equation} \label{eq:wbar1}
P_{x+e,\omega}(H_x > T_U) \leq 4d P_{x,\bar{\omega}_x}(\widetilde{H}_x > T_U)
\end{equation} by the uniform ellipticity in \eqref{eq:kappa}, we see that for $\epsilon \leq \frac{1}{8d}$
$$
\left| \sum_{e \in V} \widetilde{\delta}(x,e) \frac{P_{x+e,\omega}(H_x > T_U)}{P_{x,\bar{\omega}_x}(\widetilde{H}_x > T_U)}\right| \leq 4d \epsilon \leq \frac{1}{2}.
$$ In particular, using that $|(1-u)^{-1} -(1+u)| \leq 2u^2$ whenever $|u|\leq\frac{1}{2}$, we may write
\begin{equation}\label{eq:decompG}
G_U[\widetilde{d}\cdot e_1](0)=A+B+C
\end{equation} with 
$$
A:=\sum_{x \in U} \frac{P_{0,\omega}(H_x < T_U)}{P_{x,\bar{\omega}_x}(\widetilde{H}_x > T_U)}(\widetilde{d}(x)\cdot e_1), 
$$
$$
B:= - \sum_{x \in U} \frac{P_{0,\omega}(H_x < T_U)}{P_{x,\bar{\omega}_x}(\widetilde{H}_x > T_U)}(\widetilde{d}(x)\cdot e_1) \sum_{e \in V} \widetilde{\delta}(x,e) \frac{P_{x+e,\omega}(H_x > T_U)}{P_{x,\bar{\omega}_x}(\widetilde{H}_x > T_U)}
$$ and
$$
|C| \leq 2\frac{\epsilon}{d} \sum_{x \in U} \frac{P_{0,\omega}(H_x < T_U)}{P_{x,\bar{\omega}_x}(\widetilde{H}_x > T_U)}\left(\sum_{e \in V} |\widetilde{\delta}(x,e)| \frac{P_{x+e,\omega}(H_x > T_U)}{P_{x,\bar{\omega}_x}(\widetilde{H}_x > T_U)}\right)^2.
$$ 
Note that, since $P_{0,\omega}(H_x < T_U)$ and $P_{x,\bar{\omega}_x}(\widetilde{H}_x> T_U)$ are independent of $\widetilde{d}(x)$, we have $\E(A)=0$. On the other hand, if $c_2$ is chosen large enough then $\E(|C|) \leq \frac{1}{5}d\lambda L^2$. Indeed, from \eqref{eq:wbar1}
we obtain
$$
\left(\sum_{e \in V} |\widetilde{\delta}(x,e)| \frac{P_{x+e,\omega}(H_x > T_U)}{P_{x,\bar{\omega}_x}(\widetilde{H}_x > T_U)}\right)^2 \leq (2d)^2 (4d)^2 \sum_{e \in V} |\widetilde{\delta}(x,e)|^2 = 64d^4 \|\widetilde{\delta}(x)\|_{2}^2
$$ where $\widetilde{\delta}(x):=(\widetilde{\delta}(x,e))_{e \in V}$ and $\|\cdot\|_2$ is the standard $\ell^2$-norm in $\R^d$, so that 
$$
\E(|C|) \leq 128 d^3 \epsilon \sum_{x \in U} \E\left( \frac{P_{0,\omega}(H_x < T_U)}{P_{x,\bar{\omega}_x}(\widetilde{H}_x > T_U)}  \|\widetilde{\delta}(x)\|_2^2\right).
$$ Now, since $P_{0,\omega}(H_x < T_U)$ and $P_{x,\bar{\omega}_x}(\widetilde{H}_x> T_U)$ are also independent of $\widetilde{\delta}(x)$, it follows that
\begin{align*}
\E(|C|) &\leq 128 d^3 \epsilon \sigma^2 \sum_{x \in U} \E\left( \frac{P_{0,\omega}(H_x < T_U)}{P_{x,\bar{\omega}_x}(\widetilde{H}_x > T_U)}\right) \\
& \leq 512 d^4 \epsilon \sigma^2 \sum_{x \in U} \E( g_U(0,x))\\
&=512 d^4 \epsilon \sigma^2 \E(G_U[\mathbbm{1}](0)),
\end{align*} where, to obtain the second inequality, we have used the fact that 
\begin{equation} \label{eq:cota2}
P_{x,\omega}(\widetilde{H}_x > T_U) \leq 4d P_{x,\bar{\omega}_x}(\widetilde{H}_x > T_U)
\end{equation} which follows at once from \eqref{eq:wbar1}. Finally, recalling that $\E(G_U[\mathbbm{1}](0)) \leq \frac{4}{3}dL^2$ whenever $\epsilon L < \frac{3}{4}$ by \cite[Proposition 2.2]{Sz03}, we conclude that
$$
\E(|C|) \leq \frac{2048}{3}\epsilon d^5 \sigma^2 L^2.
$$ Hence, by taking $c_2> 0$ sufficiently large so that $\frac{1}{5}d\lambda \geq \frac{2048}{3} \epsilon d^5 \sigma^2$ when \eqref{eq:lambda1} holds, we see that $\E(|C|) \leq \frac{1}{5}d\lambda L^2$ is satisfied as claimed. Thus, in order to conclude the proof it will suffice to show that
\begin{equation} \label{eq:cota4}
|\E(B)| \leq \frac{1}{5}d\lambda L^2.
\end{equation} To this end, we first notice that, since for all $x \in U$ we have
$$
\sum_{e \in V} \widetilde{\delta}(x,e)=0,
$$ $B$ remains unchanged if we replace the term $P_{x+e,\omega}(H_x > T_U)$ in the innermost sum with 
$$
P_{x+e,\omega}(H_x < T_U)-P_{x+e_1,\omega}(H_x < T_U).
$$ Since the latter term is independent of $\omega(x)$, we obtain that
\begin{align}
\E(|B|)&\leq \sum_{x \in U , e \in V} \E\left(\frac{P_{0,\omega}(H_x <T_U)}{P^2_{x,\bar{\omega}_x}(\widetilde{H}_x > T_U)}|P_{x+e,\omega}(H_x < T_U)-P_{x+e_1,\omega}(H_x < T_U)| |\widetilde{d}(x)\cdot e_1||\widetilde{\delta}(x,e)|\right) \nonumber \\
\nonumber \\
& = \sum_{x \in U , e \in V} \E\left(\frac{P_{0,\omega}(H_x <T_U)}{P^2_{x,\bar{\omega}_x}(\widetilde{H}_x > T_U)}|P_{x+e,\omega}(H_x < T_U)-P_{x+e_1,\omega}(H_x < T_U)|\right) \E(|\widetilde{d}(x)\cdot e_1||\widetilde{\delta}(x,e)|). \label{eq:cota3}
\end{align}
Now, on the one hand we have $|\widetilde{d}(x)\cdot e_1|=|\widetilde{\delta}(x,e_1)-\widetilde{\delta}(x,-e_1)|\leq |\widetilde{\delta}(x,e_1)|+|\widetilde{\delta}(x,-e_1)|$ so that by the Cauchy-Schwarz inequality
$$
\E(|\widetilde{d}(x)\cdot e_1||\widetilde{\delta}(x,e)|) \leq 2\sigma^2
$$ for all $x \in U, e \in V$. On the other hand, by \eqref{eq:cota2} and the fact that for all $y,x \in U$
$$
g_U(y,x,\omega)=\frac{P_{y,\omega}(H_x < T_U)}{P_{x,\omega}(\widetilde{H}_x > T_U)}
$$ we have that the leftmost expectation in \eqref{eq:cota3} can be bounded from above by 
$$
(4d)^2 g_U(0,x,\omega)|g_U(x+e,x,\omega)-g_U(x+e_1,x,\omega)|
$$ so that in order to show \eqref{eq:cota4} it will suffice to find some constant $c=c(d)>0$ so that
\begin{equation} \label{eq:cota5}
\sup_{e \in V, \omega \in \Omega_{\epsilon}} \sum_{x \in U} g_U(0,x,\omega)|g_U(x+e,x,\omega)-g_U(x+e_1,x,\omega)| \leq c\left( \epsilon \log L + \frac{1}{L}\right)L^2
\end{equation} and then choose the constant $c_2$ from the statement of the proposition large enough. But \eqref{eq:cota5} can be shown exactly as in the proof of \cite[Proposition 3.1, display (3.22)]{Sz03}. We omit the details.

\subsection{Controlling the fluctuations of $G_U[\vec{d}\cdot e_1](0)$}

The next step is to obtain an analogue of the result in \cite[Proposition 3.2]{Sz03}, which gives control on the fluctuations of $G_U[\vec{d}\cdot e_1](0)$, for the $L^2(\PP)$-setting. The key to this extension is the following lemma, a variation of Azuma-Hoeffding's inequality incorporating the conditional variance of the underlying martingale difference.  

\begin{lemma}\label{lemma:azuma} Let $(F_n)_{n \in \N_0}$ be a martingale with respect to some filtration $(\mathcal{G}_n)_{n \in \N_0}$ satisfying $\mathcal{G}_0=\{\emptyset,\Omega\}$. If there exist a constant $b > 0$ and a sequence $(v_n)_{n \in \N} \subseteq \R_{\geq 0}$ such that for every $n \in \N$
	$$
	|F_n-F_{n-1}| \leq b \hspace{2cm}\text{ and }\hspace{2cm} \E((F_n-F_{n-1})^2|\mathcal{G}_{n-1}) \leq v_n^2,
	$$ then for any $u > 0$ one has that
	$$
	\max\left\{P\left( \liminf_{n \rightarrow +\infty} F_n > F_0 + u\right),P\left( \liminf_{n \rightarrow +\infty} F_n < F_0 - u\right) \right\} \leq \exp\left\{ -\frac{1}{2}\cdot \frac{u^2}{\sum_{n \in \N}v_n^2+\frac{1}{3}ub}\right\}.
	$$
\end{lemma}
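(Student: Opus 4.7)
My plan is to adapt the classical Bernstein-type exponential supermartingale argument, combined with Doob's maximal inequality to pass from fixed-time bounds to almost-sure control on the $\liminf$. I would set $D_n := F_n - F_{n-1}$, so that $(D_n)_{n \in \N}$ becomes a martingale difference sequence with $|D_n| \leq b$ and $\E(D_n^2\,|\,\mathcal{G}_{n-1}) \leq v_n^2$. The key analytic ingredient will be the conditional exponential moment bound: for every $\lambda \in (0, 3/b)$,
\[
\E\!\left(e^{\lambda D_n}\,\big|\,\mathcal{G}_{n-1}\right) \,\leq\, \exp\!\left( \frac{\lambda^2 v_n^2/2}{1 - \lambda b/3} \right),
\]
which I would obtain by expanding $e^{\lambda D_n}$ in a power series, using $\E(D_n\,|\,\mathcal{G}_{n-1}) = 0$ to kill the linear term, dominating $\E(|D_n|^k\,|\,\mathcal{G}_{n-1}) \leq b^{k-2} v_n^2$ for $k \geq 2$, and summing via the elementary bound $k! \geq 2\cdot 3^{k-2}$ to produce the geometric series on the right-hand side.

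Next, fixing $\lambda \in (0, 3/b)$, I would set $\alpha := \lambda^2/(2(1 - \lambda b/3))$ and $V_n := \sum_{k=1}^n v_k^2$ and define
\[
M_n := \exp\bigl(\lambda(F_n - F_0) - \alpha V_n\bigr).
\]
The exponential moment bound above immediately implies that $(M_n)_{n \in \N_0}$ is a non-negative supermartingale with $M_0 = 1$, so Doob's maximal inequality yields $P(\sup_n M_n \geq t) \leq 1/t$ for every $t > 0$. Since $V_n \leq V := \sum_{k \in \N} v_k^2$ for all $n$, on the event $\{\sup_n (F_n - F_0) \geq u\}$ we have $\sup_n M_n \geq e^{\lambda u - \alpha V}$, so substituting $t = e^{\lambda u - \alpha V}$ gives
\[
P\!\left(\sup_{n \in \N_0} (F_n - F_0) \geq u\right) \,\leq\, \exp(-\lambda u + \alpha V).
\]
I would then optimize over $\lambda$, the optimum being $\lambda_\ast = u/(V + ub/3)$ (which lies in $(0,3/b)$), to arrive at the Bernstein-type estimate
\[
P\!\left(\sup_{n \in \N_0} (F_n - F_0) \geq u\right) \,\leq\, \exp\!\left( -\tfrac{1}{2} \cdot \frac{u^2}{V + \tfrac{1}{3} ub} \right).
\]

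To close the argument, I would use $\sup_n F_n \geq \liminf_n F_n$ to obtain $\{\liminf_n F_n > F_0 + u\} \subseteq \{\sup_n (F_n - F_0) \geq u\}$, bounding the first probability in the statement. For the second, I would apply the exact same argument to the martingale $(-F_n)_{n \in \N_0}$, which satisfies the same hypotheses with the same constants $b$ and $(v_n)$, yielding an identical Bernstein bound on $P(\sup_n(F_0 - F_n) \geq u)$; then the inclusion $\{\liminf_n F_n < F_0 - u\} \subseteq \{\inf_n F_n \leq F_0 - u\} = \{\sup_n (F_0 - F_n) \geq u\}$ finishes the job.

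The only mildly delicate step will be the conditional exponential moment inequality; everything after that is routine supermartingale manipulation. In fact, Doob's maximal inequality really produces the stronger Bernstein bound for $\sup_n |F_n - F_0|$, so the $\liminf$ formulation in the statement is considerably weaker than what the argument actually delivers.
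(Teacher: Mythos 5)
Your proof is correct, but it follows a genuinely different route from the paper's. The paper disposes of the lemma in a few lines: it rewrites the event $\{\liminf_{n}F_n - F_0 > u\}$ as a $\limsup$ of events on which, for some finite $k$, the walk has exceeded $u$ while the running sum of conditional variances stays below $\sum_{n\in\N}v_n^2$, and then invokes Theorem 2.1 and Remark 2.1 of \cite{FGL12} as a black box, obtaining the second bound by symmetry with $-F_n$. You instead reprove the required Bernstein-type inequality from scratch via the classical Freedman exponential-supermartingale argument: the conditional exponential moment bound with $\E(|D_n|^k\mid\mathcal G_{n-1})\le b^{k-2}v_n^2$ and $k!\ge 2\cdot 3^{k-2}$, the nonnegative supermartingale $M_n=\exp(\lambda(F_n-F_0)-\alpha V_n)$, Doob's (Ville's) maximal inequality, and the choice $\lambda_\ast=u/(V+ub/3)\in(0,3/b)$, which indeed produces exactly the exponent $-\tfrac12\,u^2/(\sum_{n\in\N}v_n^2+\tfrac13 ub)$; the inclusions $\{\liminf_n F_n>F_0+u\}\subseteq\{\sup_n(F_n-F_0)\ge u\}$ and $\{\liminf_n F_n<F_0-u\}\subseteq\{\sup_n(F_0-F_n)\ge u\}$ then close the argument, and the second tail is handled by the same bound applied to $-F_n$. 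What your route buys is self-containedness and in fact a stronger conclusion (a one-sided maximal inequality over all times); what the paper's route buys is brevity, leaning on a reference that also covers sharper variants. Two minor caveats, neither a gap: your $\lambda_\ast$ is not literally the minimizer of $\lambda\mapsto-\lambda u+\alpha(\lambda)V$, but substituting it gives exactly the stated bound, so nothing is lost; and your closing remark that the argument bounds $\sup_n|F_n-F_0|$ by the same quantity is only true after a union bound (hence a factor $2$), though the lemma only asks for the maximum of the two one-sided probabilities, which your argument does deliver.
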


\begin{proof} Call $F_\infty:=\liminf_{n \rightarrow +\infty} F_n$. Upon noticing that
	$$
	P( F_\infty - F_0 > u) \leq \limsup_{n \rightarrow +\infty} P\left( F_k - F_0 > u \text{ and }\sum_{i=1}^k \E((F_i-F_{i-1})^2|\mathcal{G}_{i-1}) \leq \sum_{i \in \N}v_i^2 \text{ for some }k \in [1,n]\right),
	$$ from Theorem 2.1 and Remark 2.1 on \cite{FGL12} one obtains the bound
	$$
	P( F_\infty - F_0 > u) \leq \exp\left\{ -\frac{1}{2}\cdot \frac{u^2}{\sum_{n \in \N}v_n^2+\frac{1}{3}ub}\right\}.
	$$ The remaining bound follows by symmetry, working with $-F_n$ instead of $F_n$.
\end{proof}

The variant of \cite[Proposition 3.2]{Sz03} we require for our purposes is the following.

\begin{proposition} 
	\label{control} Suppose that $d \geq 3$. Then, for any fixed $\alpha \in [0,1)$ there exist constants $c_3,c_4>0$ depending only on $d$ and $\alpha$ such that, for any integer $L\geq 2$ satisfying $\epsilon L < c_3$, one has that 
	\begin{equation}
	\label{green-variance}
	\PP\left(\left|
	G_U[\vec{d}\cdot e_1](0)- \E(G_U[\vec{d}\cdot e_1](0))\right| \geq u\right) 
	\le  2\exp \left\{-\frac{1}{c_{4}}\cdot \frac{u^2}{c_{\alpha,L} \sigma^2 + u\epsilon}\right\}
	\end{equation} for all $u \geq 0$, where 
	\begin{equation}
	\nonumber
	c_{\alpha,L}:=
	\begin{cases}
	L^{1+(2(1-\alpha)/(2-\alpha))} & \quad{\rm if}\ d=3\\
	L^{(4(1-\alpha)/(2-\alpha))} & \quad{\rm if}\ d=4\\ 1
	& \quad{\rm if}\ d\ge 5\ {\rm and}\ \alpha\ge\frac{4}{5}.
	\end{cases}
	\end{equation}
\end{proposition}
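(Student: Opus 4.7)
The plan is to set up a site-by-site Doob martingale for $G_U[\vec d \cdot e_1](0)$ and feed it into Lemma \ref{lemma:azuma}. Enumerate the sites of $U$ as $x_1,\dots,x_N$, let $\mathcal{G}_n := \sigma(\omega(x_1),\dots,\omega(x_n))$ with $\mathcal{G}_0 = \{\emptyset,\Omega\}$, and put $F_n := \E\!\bigl(G_U[\vec d \cdot e_1](0)\mid \mathcal{G}_n\bigr)$, so that $F_0 = \E(G_U[\vec d \cdot e_1](0))$ and $F_N = G_U[\vec d \cdot e_1](0)$. Using Lemma \ref{lemma:azuma} in place of the classical Azuma--Hoeffding inequality is precisely what will replace the $\epsilon^2$ appearing in Sznitman's bound by the smaller $\sigma^2$ in the leading term of the exponent.

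First I would use the resolvent/perturbation identity already employed in the proof of Proposition \ref{green-estimates-1} (and underlying \cite[Proposition 3.2]{Sz03}) to decompose $F_n - F_{n-1}$ into a principal part linear in $\tilde\delta(x_n)$, with $\mathcal{G}_{n-1}$-measurable coefficients $\Phi_n$ built from Green's functions of the RWRE on $U$, plus a quadratic remainder of size $\epsilon$ times those same coefficients. From the $L^\infty$-bound $|\tilde\delta(x,e)| \leq \epsilon/(2d)$ and standard control of $g_U(\cdot,\cdot,\omega)$ on the slab (valid since $\epsilon L < c_3$), one deduces a uniform estimate $|F_n - F_{n-1}| \leq c(d)\,\epsilon$, giving $b = c(d)\epsilon$ in Lemma \ref{lemma:azuma}. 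Independence of $\omega(x_n)$ from $\mathcal{G}_{n-1}$ and the centering of $\tilde\delta(x_n)$ then yield
$$
\E\!\bigl((F_n-F_{n-1})^2 \mid \mathcal{G}_{n-1}\bigr) \,\leq\, c(d)\,\sigma^2\,\E\!\bigl(h(x_n,\omega)^2 \mid \mathcal{G}_{n-1}\bigr) \,=:\, v_n^2,
$$
where $h(x,\omega)$ is essentially the Green's-function weight that controls $\Phi_n$ in $L^\infty$.

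The critical step, and the main obstacle, is the bound $\sum_{n=1}^N v_n^2 \leq c\,\sigma^2\,c_{\alpha,L}$. To obtain it I would interpolate, with $\alpha$ as the interpolation parameter, between the trivial $L^\infty$-bound on $h$ and the refined pointwise decay estimates of the slab Green's function $g_{0,U}$ inherited from the SSRW; the three cases in the definition of $c_{\alpha,L}$ come directly from the distinct $L^p$-integrability regimes of $g_{0,U}$ in dimensions $d=3$ (borderline), $d=4$, and $d\geq 5$. Once this sum-of-conditional-variances estimate is in hand, applying Lemma \ref{lemma:azuma} to $F_n$ and to $-F_n$ yields \eqref{green-variance} with constants depending only on $d$ and $\alpha$. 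The delicate point is tracking the interpolation sharply enough to recover the exact exponent of $L$ in $c_{\alpha,L}$, and in particular handling $d=3$, where the slab Green's function is just barely summable and the exponent of $L$ inflates to $1+2(1-\alpha)/(2-\alpha)$.
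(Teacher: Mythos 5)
Your plan is essentially the paper's own proof: the same Doob martingale $F_n=\E(G_U[\vec d\cdot e_1](0)\mid\mathcal G_n)$ over an enumeration of $U$, an increment bound $b=c(d,\alpha)\epsilon$ and a conditional-variance bound of order $\sigma^2 g_{0,U}(0,x_n)^{2/(2-\alpha)}$ coming from Sznitman's estimate $|\Gamma_n(\omega,\omega')|\leq c(d)\,g_{0,U}(0,x_n)^{1/(2-\alpha)}\sum_e|\omega(x_n,e)-\omega'(x_n,e)|$, then the summation $\sum_n g_{0,U}(0,x_n)^{2/(2-\alpha)}\leq c\,c_{\alpha,L}$ and Lemma \ref{lemma:azuma}. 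The only point to be careful about is that the $v_n$ in Lemma \ref{lemma:azuma} must be deterministic (your $v_n^2$ is written as a conditional expectation, and the Green's-function coefficients are not $\mathcal G_{n-1}$-measurable); the paper fixes this by averaging over $\omega'(x_n)$ and $\omega_{(i>n)}$ with Fubini and Minkowski's integral inequality, which is exactly what your interpolation step would have to deliver.
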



\begin{proof}
	We follow the proof of \cite[Proposition 3.2]{Sz03}, applying the martingale method introduced therein.
	First, let us enumerate the elements of $U$ as $U:=\{x_n:n \in \N\}$.
	Now, define the filtration
	$$
	\mathcal G_n:=
	\begin{cases}
	\sigma(\omega(x_1),\ldots,\omega(x_n))& \qquad \text{ if }n\ge 1\\
	\{\emptyset,\Omega\}&\qquad \text{ if }n=0
	\end{cases}
	$$
	and also the bounded (see \cite[Proposition 2.2]{Sz03} for a justification) $\mathcal G_n$-martingale $(F_n)_{n \in \N_0}$ given for each $n \in \N_0$ by 
	$$
	F_n:=\mathbb E(G_U[\vec{d} \cdot e_1](0)|\mathcal G_n).
	$$ Since $F_\infty:=\lim_{n \rightarrow +\infty} F_n = G_U[\vec{d}\cdot e_1](0)$ and $F_0=\E(G_U[\vec{d}\cdot e_1](0))$, by Lemma \ref{lemma:azuma} we get
	$$
	\PP\left[\left|
	G_U[\vec{d}(\cdot,\omega)\cdot e_1](0)-\E(G_U[\vec{d}\cdot e_1](0))\right|\ge u\right]
	\le  2\exp\left\{-\frac{1}{2}\cdot\frac{u^2}{\sum_{n \in \N} v_n^2 + ub}\right\}
	$$ for all $b$ and $(v_n)_{n \in \N}$ with $|F_n-F_{n-1}|\leq b$ and $\E((F_n-F_{n-1})^2|\mathcal{G}_{n-1}) \leq v_n^2$ for every $n$. 
	
	Thus, let us find such $b$ and $v_n$. To this end, for each
	$n \in \N$ and all environments $\omega,\omega'\in\Omega_\epsilon$ with $\omega \equiv \omega'$ off $x_n$, i.e. which coincide at every $x_i$ with $i \neq n$, let us define
	$$
	\Gamma_n(\omega,\omega'):=G_U[\vec{d}\cdot e_1](0,\omega)-G_U[\vec{d}\cdot e_1](0,\omega').
	$$ If $\mu$ denotes the single site distribution of $\omega_U=(\omega(x_i))_{i \in \N}$ under $\PP$, then it is simple to see that
	\begin{equation}
	\label{eq:rep}
	F_n(\omega) - F_{n-1}(\omega) = \int \left[ \int \Gamma_n(\omega,\omega') d\mu(\omega'(x_n))\right]d\mu^{\otimes (i>n)}(\omega_{(i>n)}),
	\end{equation} where $\omega_{(i>n)} :=(\omega(x_i))_{i > n}$, $\mu^{\otimes (i>n)}$ denotes its distribution under the law $\PP$ and 
	$\omega' \equiv \omega$ off $x_n$. Furthermore, it follows from the proof of \cite[Proposition 3.2]{Sz03} that for $L \geq 2$ as in the statement of the proposition one has
	\begin{equation}
	\label{eq:rep2}
	|\Gamma_n(\omega,\omega')| \leq c(d) g_{0,U}(0,x_n)^{\frac{1}{2-\alpha}} \sum_{e \in V}|\omega(x_n,e)-\omega'(x_n,e)|
	\end{equation}
	for some constant $c(d) > 0$. In particular, since $g_{0,U}(0,\cdot)$ is a bounded function, \eqref{eq:rep} and \eqref{eq:rep2} together yield that 
	$$
	|F_n-F_{n-1}| \leq \sup_{\omega \equiv \omega' \text{ off }x_n} |\Gamma_n(\omega,\omega')| \leq c(d,\alpha) \epsilon=:b
	$$ for some constant $c(d,\alpha)>0$. Now, on the other hand, by using Fubini's theorem and Minkowski's integral inequality (the latter applied twice), we obtain 
	\begin{align*}
	\E((F_n-F_{n-1})^2|\mathcal{G}_{n-1}) & = \int \left(\int \left[ \int \Gamma_n(\omega,\omega') d\mu(\omega'(x_n))\right]d\mu^{\otimes (i>n)}(\omega_{(i>n)})\right)^2 d\mu(\omega(x_n))\\
	& = \int \left(\int \left[ \int \Gamma_n(\omega,\omega') d\mu^{\otimes (i>n)}(\omega_{(i>n)})\right]d\mu(\omega'(x_n))\right)^2 d\mu(\omega(x_n))\\
	& \leq \left(\int \left[\int \left( \int \Gamma_n(\omega,\omega') d\mu^{\otimes (i>n)}(\omega_{(i>n)})\right)^2 d\mu(\omega(x_n))\right]^{\frac{1}{2}}d\mu(\omega'(x_n))\right)^2\\
	& \leq \left(\int \int \left[ \int \Gamma^2_n(\omega,\omega') d\mu(\omega(x_n)) \right]^{\frac{1}{2}} d\mu^{\otimes (i>n)}(\omega_{(i>n)})d\mu(\omega'(x_n))\right)^2.
	\end{align*} 
	But by \eqref{eq:rep2}
	we have that
	$$
	\Gamma^2_n(\omega,\omega')\leq c(d) g_{0,U}(0,x_n)^{\frac{2}{2-\alpha}}\|\omega(x_n)-\omega'(x_n)\|_2^2,$$ so that
	\begin{align*}
	\E((F_n-F_{n-1})^2|\mathcal{G}_{n-1}) &\leq c(d) g_{0,U}(0,x_n)^{\frac{2}{2-\alpha}}\left( \int \left[\int \|\omega(x_n)-\omega'(x_n)\|_2^2 d\mu(\omega(x_n))\right]^{\frac{1}{2}}d\mu(\omega'(x_n))\right)^2\\
	& \leq c'(d) g_{0,U}(0,x_n)^{\frac{2}{2-\alpha}}\left( \int \sqrt{ \E(\|\widetilde{\delta}(x_n)\|_2^2) +\|\widetilde{\delta}'(x_n)\|_2^2}d\mu(\omega'(x_n))\right)^2\\
	& \leq c'(d) g_{0,U}(0,x_n)^{\frac{2}{2-\alpha}}\left( \sqrt{ \E(\|\widetilde{\delta}(x_n)\|_2^2)} + \E(\|\widetilde{\delta}'(x_n)\|_2)\right)^2\\
	& \leq c''(d)  g_{0,U}(0,x_n)^{\frac{2}{2-\alpha}}\sigma^2=:v_n^2,
	\end{align*} where $\widetilde{\delta}(x_n):=(\omega(x_n,e)-\E(\omega(x_n,e)))_{e \in V}$ and $\widetilde{\delta}'(x_n)$ is defined analogously but with $\omega'$ instead of $\omega$.
	Finally, it follows from \cite[displays (3.34)-(3.35)]{Sz03} that
	$$
	\sum_{n \in \N} g_{0,U}(0,x_n)^{\frac{2}{2-\alpha}} \leq \widetilde{c}(d,\alpha) c_{\alpha,L}
	$$ for some $\widetilde{c}(d,\alpha)>0$ and with $c_{\alpha,L}$ the constant from the statement of the proposition, from where the result now readily follows.
\end{proof}

\subsection{Conclusion of the proof} We now finish the proof of Theorem \ref{theorem1} by checking that (T') holds. We will split the proof in two cases, depending on whether $\sigma < \epsilon^{1.5}$ or $\sigma \geq \epsilon^{1.5}$. 

Notice that if $\sigma < \epsilon^{1.5}$ then $\lambda \geq \sigma \epsilon^{1.5-\eta}$ implies that $\lambda \geq \sigma^2 \epsilon^{-\eta}$. In particular, given $\eta \in (0,1)$ there exists $\epsilon_0(\eta,d) \in (0,1)$ such that $\lambda > 4d\sigma^2(1+9\epsilon)$ if $\epsilon \leq \epsilon_0$. Thus, in this case that (T') holds follows from Theorem \ref{theorem2}, which we will prove in Section 4 in an independent manner.

Hence, it suffices to prove the result assuming that $\sigma \geq \epsilon^{1.5-\eta}$. Note that, in this case, we have the inequality
$$
\lambda \geq \max\{ \sigma \epsilon^{1.5-\eta},\epsilon^{3-\eta}\}=:\lambda_0.
$$ We will prove the result by verifying the polynomial condition (P) in \eqref{eq:condp}. To this end, we will use Propositions \ref{green-estimates-1} and \ref{control} above, as well as the estimates derived in \cite{Sz03} to establish the validity of the so-called effective criterion. 

First, let us fix a constant $\theta > 0$ with value to be specified later and set
$$
L:=2[\theta \epsilon^{-1}]\hspace{2cm}\text{ and }\hspace{2cm}M:=L^4.
$$ Now, consider the box $B$ given by 
\begin{equation}\label{eq:defb}
B:=(-M,M) \times \left(-\frac{1}{4}M^3,\frac{1}{4}M^3\right)^{d-1}
\end{equation} and define its frontal part $\partial_+ B$  by analogy with Section \ref{sec:GN}. Note that if for each $x \in B^*_{M}$ (with $B_M$ as in \eqref{beeme}) we consider $B(x):=B+x$, i.e the translate of $B$ centered at $x$, then by construction of $B$ we have that for any $\omega \in \Omega$
\begin{equation}\label{eq:pcb}
P_{x,\omega}\left( X_{T_{B_{M}}} \notin \partial_+ B_{M}\right) \leq P_{x,\omega}\left( X_{T_{B(x)}} \notin \partial_+ B(x)\right).
\end{equation} Thus, it follows from the translation invariance of $\PP$ that, in order to check that condition (P) holds, it will suffice to show that 
\begin{equation}\label{eq:pc1}
P_0\left( X_{T_B} \notin \partial_+ B\right) \leq e^{-c\epsilon^{-1}}
\end{equation}for some constant $c=c(d,\eta)> 0$ if $\epsilon$ is taken sufficiently small (but depending only on $d$ and $\eta$). To do this, the idea is to exploit the estimates derived in \cite[Section 4]{Sz03}. Indeed, if for any $\omega \in \Omega$ we define
$$
q_B(\omega):=P_{0,\omega}\left( X_{T_B} \notin \partial_+ B\right) \hspace{1cm}\text{ and }\hspace{1cm}\rho_B(\omega):=\frac{q_B(\omega)}{1-q_B(\omega)}
$$ then we have 
$$
P_0(X_{T_B} \notin \partial_+ B) = \E(q_B) \leq \E(\sqrt{q_B}) \leq \E(\sqrt{\rho_B}).
$$ Now, by following the arguments of \cite[Section 4]{Sz03}, we see that there exists  $\widetilde{\epsilon}_1=\widetilde{\epsilon}_1(d,\eta)> 0$ such that if $\epsilon L < \frac{3}{4}$ and $\epsilon \leq \widetilde{\epsilon}_1$ then  
\begin{equation}
\label{eq:preboundrhob}
\E(\sqrt{\rho_B}) \leq \frac{2(\E(\widehat{\rho}(0)))^{\frac{M}{2L}}}{(1-(\E(\widehat{\rho}(0)))^{\frac{1}{2}})_+} +2d\exp\left( M \left[\frac{\log 4d}{2}- 50\frac{\log 4d}{\log 2}\left(p-\frac{7}{100}\right)^2_+\right]\right)
\end{equation} where 
$$
\widehat{\rho}(0):=\sup\left\{\frac{1-\frac{1}{L}G_U[\vec{d}\cdot e_1](x)}{1+\frac{1}{L}G_U[\vec{d}\cdot e_1](x)}: x \cdot e_1 = 0\,,\,\sup_{2\leq j \leq d} |x\cdot e_j|<\frac{1}{4}M^3\right\}
$$ and, furthermore, $\widehat{\rho}(0)$ and $p$ are such that
\begin{equation}
\label{eq:preboundrhop}
\widehat{\rho}(0)\leq 3\hspace{2cm}\text{ and }\hspace{2cm}p \geq 1 - M^{2d}\PP\left( G_U[\vec{d}\cdot e_1](0) \leq \lambda_0 L + \frac{4}{L^2}\right).
\end{equation} 
Thus, if we manage to show that for $\epsilon$ sufficiently small we have the more precise estimates
\begin{equation}\label{eq:boundrhop}
\E(\widehat{\rho}(0)) \leq 1 - \frac{1}{10}d\lambda_0 L\hspace{2cm}\text{ and }\hspace{2cm}
p \geq \frac{3}{4},
\end{equation} a straightforward computation would then yield that, under \eqref{eq:preboundrhob} and \eqref{eq:boundrhop}, one has
$$
\E(\sqrt{\rho_B}) \leq \frac{80}{d\lambda_0 L}\exp\left(-\frac{d}{20}\lambda_0 M\right) + 2d\exp\left(-(\log 4d) M \right), 
$$ from where \eqref{eq:pc1} now immediately follows.

Hence, let us show \eqref{eq:boundrhop}. First, choose $\alpha \in (0,1)$ such that 
$$
2\cdot\frac{1-\alpha}{2-\alpha} = \eta,
$$ and let us fix the value of $\theta$ so that $\theta < \frac{1}{2}\min\{\frac{3}{4},c_1,c_3\}$, where $c_1,c_3>0$ are  the constants from Propositions \ref{green-estimates-1} and \ref{control}, respectively. Observe that, given $\alpha$ and $\theta$ (which depend only on $d$ and $\eta$), there exists $\widetilde{\epsilon}_2 = \widetilde{\epsilon}_2(d,\eta) \in (0,\widetilde{\epsilon}_1)$ such that if $\epsilon \leq \widetilde{\epsilon}_2$ then $L \geq 2$, $\epsilon L < \min\{\frac{3}{4},c_1,c_3\}$ and \eqref{eq:lambda1} holds, so that \eqref{eq:boundexp},\eqref{green-variance},\eqref{eq:preboundrhob} and \eqref{eq:preboundrhop} all hold as well.

Now, using \eqref{eq:boundexp},\eqref{green-variance},\eqref{eq:preboundrhop} and the fact that $\lambda \geq \lambda_0$, a straightforward computation yields 
\begin{align*}
\E(\widehat{\rho}(0)) &\leq \frac{1-\frac{1}{5}d\lambda_0 L}{1+\frac{1}{5}d\lambda_0 L} + 3\PP\left( \inf_{\substack{ x \cdot e_1 = 0 \\ \sup_{2 \leq j \leq d}|x\cdot e_j|<\frac{1}{4}M^3}}\left(G_U[\vec{d}\cdot e_1](x) - \E(G_U[\vec{d}\cdot e_1](x))\right) \leq -\frac{1}{5}d\lambda_0 L^2 \right)\\
& \leq 1-\frac{1}{5}d\lambda_0L + 6\left(\frac{M^3}{2}\right)^{d-1} \PP\left( G_U[\vec{d}\cdot e_1](0) - \E(G_U[\vec{d}\cdot e_1](0)) \leq -\frac{1}{5}d\lambda_0 L^2 \right)\\
& \leq 1-\frac{1}{5}d\lambda_0L + 12\left(\frac{M^3}{2}\right)^{d-1} \exp\left\{-\frac{1}{c_5} \cdot \frac{\lambda_0^2 L^4}{c_{\alpha,L}\sigma^2+\lambda_0L} \right\}\\
& \leq 1 - \frac{1}{10}d\lambda_0L
\end{align*} if $\epsilon$ sufficiently small (depending only on $d$ and $\eta$), where $c_5=c_5(d,\eta)>0$ is just some constant. Similarly, if $\epsilon$ is sufficiently small then $\lambda_0 L + \frac{4}{L^2} \leq \frac{1}{10}d\lambda_0 L^2$, so that by \eqref{eq:boundexp}, \eqref{green-variance} and \eqref{eq:preboundrhop}, we obtain
$$
p \geq 1 - M^{2d}\PP\left( G_U[\vec{d}\cdot e_1](0) - \E(G_U[\vec{d}\cdot e_1](0)) \leq -\frac{1}{10}d\lambda_0L\right) \geq \frac{3}{4}
$$ provided that $\epsilon$ is sufficiently small (depending only on $d$ and $\eta$). Now, by the discussion above, we conclude that there exists $\epsilon_0=\epsilon_0(d,\eta) \in (0,\min\{\widetilde{\epsilon}_1,\widetilde{\epsilon}_2\})$ such that if $\epsilon \leq \epsilon_0$ then (P) holds. This finishes the proof.

\begin{obs} Notice that the assumption $d=3$ was not required anywhere throughout the proof, only that $d \geq 3$ for Propositions \ref{green-estimates-1} and \ref{control}. Indeed, the very same argument also works for $d \geq 4$, giving that
	$$
	\lambda \geq \sigma \epsilon^{1.5-\eta}
	$$ implies the validity of condition (T'). However, for $d \geq 4$ this does not give any additional ballisticity results with respect to those already given by \cite{Sz03} and Theorem \ref{theorem2}. This is the reason why we chose to state Theorem \ref{theorem1} only for $d=3$ in the Introduction. 
\end{obs}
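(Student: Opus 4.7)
The remark makes two distinct claims, so the plan is to address each in turn: first, that the argument of Section 3.3 works verbatim for all $d\geq 3$ rather than just $d=3$; second, that for $d\geq 4$ the resulting hypothesis $\lambda\geq\sigma\epsilon^{1.5-\eta}$ is already covered by either \cite{Sz03} or Theorem \ref{theorem2}, so nothing new is obtained at the level of ballisticity.

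For the first claim, I would retrace the proof of Theorem \ref{theorem1} and locate every place where the dimension enters. The only two genuine dimensional inputs are Propositions \ref{green-estimates-1} and \ref{control}, which are stated for $d\geq 3$; every other ingredient (the splitting into $\sigma<\epsilon^{1.5}$ vs.\ $\sigma\geq\epsilon^{1.5}$, the choice $L=2[\theta\epsilon^{-1}]$, $M=L^{4}$, the box $B$ from \eqref{eq:defb}, the reduction \eqref{eq:pc1} via \eqref{eq:pcb}, and the estimates \eqref{eq:preboundrhob}--\eqref{eq:preboundrhop} imported from \cite[Section 4]{Sz03}) is dimension-free. The one point that requires a small check is the choice of $\alpha\in[0,1)$ in Proposition \ref{control}: whereas for $d=3$ one sets $2(1-\alpha)/(2-\alpha)=\eta$, for $d=4$ one must choose $\alpha$ close enough to $1$ so that $4(1-\alpha)/(2-\alpha)<1+2\eta$, and for $d\geq 5$ one simply takes any $\alpha\geq 4/5$ so that $c_{\alpha,L}=1$. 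With such $\alpha$, the ratio $\lambda_{0}^{2}L^{4}/(c_{\alpha,L}\sigma^{2}+\lambda_{0}L)$ is still a negative power of $\epsilon$ (in the $\sigma\geq\epsilon^{1.5}$ regime, $\lambda_{0}=\sigma\epsilon^{1.5-\eta}$ so $\lambda_{0}^{2}L^{4}\sim\sigma^{2}\epsilon^{-1-2\eta}$, dominating $c_{\alpha,L}\sigma^{2}$), so the concentration used to establish \eqref{eq:boundrhop} remains valid, and the rest of the proof goes through unchanged.

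For the second claim, I would show that, assuming $d\geq 4$ and $\lambda\geq\sigma\epsilon^{1.5-\eta}$, one of the two previously known sufficient conditions for (T') always applies, after possibly replacing $\eta$ by a slightly smaller positive constant. Split according to the size of $\sigma$: if $\sigma\leq\epsilon^{1.5}$, then
\[
\lambda\;\geq\;\sigma\epsilon^{1.5-\eta}\;=\;\sigma^{2}\cdot\frac{\epsilon^{1.5-\eta}}{\sigma}\;\geq\;\sigma^{2}\epsilon^{-\eta},
\]
which exceeds $4d\sigma^{2}(1+9\epsilon)$ for all sufficiently small $\epsilon$, so Theorem \ref{theorem2} applies and in fact yields Kalikow's (strictly stronger) condition; if instead $\sigma>\epsilon^{1.5}$, then $\lambda\geq\sigma\epsilon^{1.5-\eta}>\epsilon^{3-\eta}$, which is exactly Sznitman's hypothesis \eqref{eq:sz} for $d\geq 4$. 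In either case (T') is already implied by a previously established theorem, and no new ballisticity regime is produced for $d\geq 4$.

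I do not anticipate any real obstacle here; the content is a verification plus a case analysis. The one point that deserves a moment's care is the choice of $\alpha$ in dimension $d=4$, where the exponent $4(1-\alpha)/(2-\alpha)$ in $c_{\alpha,L}$ is larger than its $d=3$ counterpart, so one must confirm that picking $\alpha$ close to $1$ (depending on $\eta$) still keeps $c_{\alpha,L}\sigma^{2}$ subdominant to $\lambda_{0}^{2}L^{4}$ in the relevant regime. This is a routine calculation and adds only a dependence of $\epsilon_{0}$ on $d$.
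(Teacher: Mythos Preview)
Your proposal is correct and, in fact, supplies more detail than the paper does: the remark in the paper is stated without any supporting argument, so there is no ``paper's own proof'' to compare against. Your two-part analysis---tracing the dimensional dependence in Section~3.3 back to Propositions~\ref{green-estimates-1} and~\ref{control} alone, and then splitting on $\sigma\lessgtr\epsilon^{1.5}$ to show that for $d\geq 4$ the hypothesis $\lambda\geq\sigma\epsilon^{1.5-\eta}$ is already subsumed by either Theorem~\ref{theorem2} or Sznitman's condition \eqref{eq:sz}---is exactly the justification the remark implicitly relies on.

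One small comment: your discussion of the choice of $\alpha$ for $d=4$ and $d\geq 5$ is a genuine addition, since the paper's proof in Section~3.3 fixes $\alpha$ via $2(1-\alpha)/(2-\alpha)=\eta$, which is tailored to the $d=3$ exponent in $c_{\alpha,L}$. Strictly speaking, for $d\geq 4$ one could keep that same $\alpha$ (since $c_{\alpha,L}$ is no larger for $d=4$ than for $d=3$ when $\alpha$ is close to $1$, and equals $1$ for $d\geq 5$ once $\alpha\geq 4/5$), so the calculation goes through with even more room; your explicit check that $4(1-\alpha)/(2-\alpha)<1+2\eta$ suffices is correct but slightly more work than necessary.
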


\section{Proof of Theorem \ref{theorem2}}

We begin the proof by recalling the following explicit formula for Kalikow's drift $\vec{d}_{B,x}$ proved in \cite[Lemma 9]{LRSS}: for all $y \in B$ we have 
\begin{equation} \label{eq:driftkal}
\vec{d}_{B,x}(y) \cdot e_1 = \frac{\E\left( \frac{\vec{d}(y,\omega) \cdot e_1}{\sum_{e \in V}\omega(y,e)f_{B,x}(y,y+e,\omega)}\right)}{\E\left( \frac{1}{\sum_{e \in V} \omega(y,e)f_{B,x}(y,y+e,\omega)}\right)},
\end{equation} where $f_{B,x}$ is given by
$$
f_{B,x}(y,z,\omega):=\frac{P_{z,\omega}(T_B \leq H_y)}{P_{x,\omega}(H_y < T_B)}
$$ with $T_B$ and $H_y$ defined as in \eqref{eq:deftb} and \eqref{eq:defhs}, respectively.

Now, for each $e \in V$ let us decompose $\omega(y,e)=\bar{\omega}(y,e)+\widetilde{\delta}(y,e)$ with
$$
\bar{\omega}(y,e):=\E(\omega(y,e)) \hspace{2cm}\text{ and }\hspace{2cm}\widetilde{\delta}(y,e)=\omega(y,e)-\E(\omega(y,e)),
$$ and write
$$
s(y,\omega):= \sum_{e \in V} \bar{\omega}(y,e)f_{B,x}(y,y+e,\omega) \hspace{1cm}\text{ and }\hspace{1cm} r(y,\omega):= \sum_{e \in V} \widetilde{\delta}(y,e)f_{B,x}(y,y+e,\omega). 
$$ Then, using the second order expansion $\frac{1}{a+b}=\frac{1}{a}-\frac{b}{a^2}+\frac{b^2}{(a+\xi)^3}$ for some $\xi \in (\min\{b,0\},\max\{0,b\})$, one can write the numerator in \eqref{eq:driftkal} as
$$
\E\left( \frac{\vec{d}(y,\omega) \cdot e_1}{s(y,\omega)}\right) - \E\left( \frac{(\vec{d}(y,\omega) \cdot e_1) r(y,\omega)}{s^2(y,\omega)}\right) + \E\left( \frac{(\vec{d}(y,\omega) \cdot e_1) r^2(y,\omega)}{(s(y,\omega)+\xi)^3}\right)
$$ for some $\xi \in (\min\{r(y,\omega),0\},\max\{0,r(y,\omega)\})$. Being $(f_{B,x}(y,y+e,\omega))_{e \in V}$ independent of $\omega(y)$, it follows that 
$$
\E\left( \frac{\vec{d}(y,\omega) \cdot e_1}{s(y,\omega)}\right) = \lambda \E\left( \frac{1}{s(y,\omega)}\right)
$$ and also that 
\begin{equation}
\label{eq:0bound}
\E\left( \frac{(\vec{d}(y,\omega) \cdot e_1) r(y,\omega)}{s^2(y,\omega)}\right) = \E\left( \frac{\sum_{e \in V} \E((\vec{d}(y,\omega) \cdot e_1) \widetilde{\delta}(y,e)) f_{B,x}(y,y+e,\omega)}{s^2(y,\omega)}\right).
\end{equation} Furthermore, since $\E(\widetilde{\delta}(y,e))=0$ for all $e \in V$, we have 
$$
\E((\vec{d}(y,\omega) \cdot e_1) \widetilde{\delta}(y,e))= \E((\widetilde{\delta}(y,e_1)-\widetilde{\delta}(y,-e_1))\widetilde{\delta}(y,e))
$$ so that, by Cauchy-Schwarz inequality, we obtain the bound
\begin{equation}
\label{eq:boundsigma}
|\E((\vec{d}(y,\omega) \cdot e_1) \widetilde{\delta}(y,e))| \leq 2\sigma^2.
\end{equation}
Now, since $\bar{\omega}(y,e) \geq \frac{1}{2d}(1-\frac{\epsilon}{2})$ for all $e \in V$ by Assumption (A2), a straightforward computation using \eqref{eq:boundsigma} then yields the estimate
\begin{equation}
\label{eq:boundsigma2}
\left|\E\left( \frac{(\vec{d}(y,\omega) \cdot e_1) r(y,\omega)}{s^2(y,\omega)}\right)\right| \leq \frac{4d}{1-\frac{\epsilon}{2}}\sigma^2\E\left( \frac{1}{s(y,\omega)}\right).
\end{equation} Finally, using \eqref{epsilon-condition} again, we have 
\begin{equation}\label{eq:1bound}
\left|\E\left( \frac{(\vec{d}(y,\omega) \cdot e_1) r^2(y,\omega)}{(s(y,\omega)+\xi)^3}\right)\right|  \leq \frac{\epsilon}{2d} \E\left( \frac{ \left(\sum_{e \in V} |\widetilde{\delta}(y,e)|f_{B,x}(y,y+e,\omega)\right)^2}{|s(y,\omega)+\xi|^3}\right),
\end{equation} where the expectation on the right-hand side of \eqref{eq:1bound} can be further bounded from above by
\begin{equation}\label{eq:2bound}
\E\left( \frac{ \left(\sum_{e \in V} |\widetilde{\delta}(y,e)|f_{B,x}(y,y+e,\omega)\right)^2}{s^3(y,\omega)}\right) + \E\left( \frac{ \left(\sum_{e \in V} |\widetilde{\delta}(y,e)|f_{B,x}(y,y+e,\omega)\right)^2}{(\sum_{e \in V} \omega(y,e)f_B(y,y+e,\omega))^3}\right).
\end{equation} By an argument similar to the one yielding \eqref{eq:boundsigma2}, we obtain 
$$
\E\left( \frac{ \left(\sum_{e \in V} |\widetilde{\delta}(y,e)|f_{B,x}(y,y+e,\omega)\right)^2}{s^3(y,\omega)}\right) \leq \frac{4d^2}{(1-\frac{\epsilon}{2})^2}\sigma^2 \E\left( \frac{1}{s(y,\omega)}\right).
$$ On the other hand, since by \eqref{epsilon-condition} we have that $\omega(y,e) \geq c_\epsilon\bar{\omega}(e)$ for all $e \in V$ and $c_\epsilon:=\frac{1-\frac{\epsilon}{2}}{1+\frac{\epsilon}{2}} \in (0,1)$, the same argument now yields 
$$
\E\left( \frac{ \left(\sum_{e \in V} |\widetilde{\delta}(y,e)|f_{B,x}(y,y+e,\omega)\right)^2}{(\sum_{e \in V} \omega(y,e)f_B(y,y+e,\omega))^3}\right) \leq 4d^2\frac{(1+\frac{\epsilon}{2})}{(1-\frac{\epsilon}{2})^3} \sigma^2 \E\left( \frac{1}{s(y,\omega)}\right)
$$ so that
$$
\left|\E\left( \frac{(\vec{d}(y,\omega) \cdot e_1) r^2(y,\omega)}{(s(y,\omega)+\xi)^3}\right)\right| \leq \frac{2d}{(1-\frac{\epsilon}{2})^2}\left[1+\frac{1+\frac{\epsilon}{2}}{1-\frac{\epsilon}{2}}\right] \epsilon \sigma^2 \E\left( \frac{1}{s(y,\omega)}\right).
$$ Finally, by repeating the same procedure but with  $\vec{d}(y,\omega)$ replaced by $1$, the denominator in \eqref{eq:driftkal} can be written as
$$
\E\left( \frac{1}{s(y,\omega)}\right) + \E\left( \frac{r^2(y,\omega)}{(s(y,\omega)+\xi)^3}\right)
$$ where
$$
\left|\E\left( \frac{r^2(y,\omega)}{(s(y,\omega)+\xi)^3}\right)\right| \leq \frac{4d^2}{(1-\frac{\epsilon}{2})^2}\left[1+\frac{1+\frac{\epsilon}{2}}{1-\frac{\epsilon}{2}}\right] \sigma^2 \E\left( \frac{1}{s(y,\omega)}\right).
$$ Gathering all estimates obtained, we conclude that
$$ 
\vec{d}_{B,x}(y) \cdot e_1 \geq \frac{ \lambda - 4d \sigma^2 \left(\frac{1}{1-\frac{\epsilon}{2}}+\frac{\epsilon}{(1-\frac{\epsilon}{2})^3}\right)}{1+\frac{8d^2}{(1-\frac{\epsilon}{2})^3}\sigma^2}
$$ so that $\varepsilon_K> 0$ (where $\epsilon_K$ is the one defined in \eqref{defepsK}) provided that
\begin{equation}\label{eq:drifcota}
\lambda > 4d \sigma^2 \left(\frac{1}{1-\frac{\epsilon}{2}}+\frac{\epsilon}{(1-\frac{\epsilon}{2})^3}\right).
\end{equation} Since a simple computation shows that for all $\epsilon \in (0,1)$ we have
$$
\frac{1}{1-\frac{\epsilon}{2}}+\frac{\epsilon}{(1-\frac{\epsilon}{2})^3} \leq 1 + 9\epsilon,
$$ \eqref{eq:drifcota} then immediately gives the result.

\section{Proof of Theorem \ref{theo:nokal}}

We follow the approach used in the proof of \cite[Theorem 5.1]{Sz03}, introducing some modifications along the way. Define the sets
$$
U_+:=\{ y \in \Z^d : y \cdot e_1 \geq 0 \} \hspace{1cm}\text{ and }\hspace{1cm}U_-:=\{ y \in \Z^d : y \cdot e_1 \leq 0\}.
$$ We will check that 
\begin{equation}
\label{eq:drifco2}
\vec{d}_{U_-,0}(0)\cdot e_1 < 0 < \vec{d}_{U_+,0}(0)\cdot e_1
\end{equation} and also that 
\begin{equation} \label{eq:drifco}
\vec{d}_{U_+,0}(0)\cdot e =\vec{d}_{U_-,0}(0) \cdot e = 0
\end{equation} for all $e \in V$ with $e \cdot e_1 = 0$, which together imply that Kalikow's condition fails for all directions. 

To show \eqref{eq:drifco2}, let us first notice that, by definition of $\vec{d}_{U_\pm,0}(0)$ and \eqref{eq:gwd}, it will be enough to check that
\begin{equation}\label{eq:desig}
\E(g_{U_-}(0,0,\cdot)(\vec{d}\cdot e_1)(0,\cdot))<0<\E(g_{U_+}(0,0,\cdot)(\vec{d}\cdot e_1)(0,\cdot)).
\end{equation}
To this end, let us write 
$$
\E(g_{U_\pm}(0,0,\cdot)(\vec{d}\cdot e_1)(0,\cdot)) =\lambda\E(g_{U_\pm}(0,0,\cdot)) + \E(g_{U_\pm}(0,0,\cdot)(\widetilde{d}\cdot e_1)(0,\cdot)),
$$ for $\widetilde{d}$ defined as in \eqref{eq:deftildes}. Observe that
$$
0 \leq \E(g_{U_\pm}(0,0,\cdot)) = \E\left(\frac{1}{P_{0,\cdot}(\widetilde{H}_0> T_{U_\pm} )}\right) \leq 4d
$$ by uniform ellipticity and also that, by manipulations analogous to those performed to obtain \eqref{eq:decompG}, for $\epsilon \leq \frac{1}{8d}$ we may write
$$
\E(g_{U_\pm}(0,0,\cdot)(\widetilde{d}\cdot e_1)(0,\cdot))=\beta_{U_\pm}+\gamma_{U_\pm},
$$ where 
$$
\beta_{U_\pm}:=- \E\left(\frac{\widetilde{d}(0)\cdot e_1}{P_{0,\bar{\omega}_0}(\widetilde{H}_0 > T_{U_\pm})} \sum_{e \in V} \widetilde{\delta}(0,e) \frac{P_{e,\omega}(H_0 > T_{U_\pm})}{P_{0,\bar{\omega}_0}(\widetilde{H}_0 > T_{U_\pm})}\right)
$$ and
\begin{equation}\label{eq:gamma}
|\gamma_{U_\pm}| \leq 2 \frac{\epsilon}{d} \E\left(\frac{1}{P_{0,\bar{\omega}_0}(\widetilde{H}_0 > T_{U_\pm})} \left( \sum_{e \in V} |\widetilde{\delta}(0,e)|\frac{P_{e,\omega}(H_0 > T_{U_\pm})}{P_{0,\bar{\omega}_0}(\widetilde{H}_0 > T_{U_\pm})}\right)^2\right).
\end{equation} Since $P_{e,\omega}(H_0 > T_{U_\pm})$ and $P_{0,\bar{\omega}_0}(\widetilde{H}_0 > T_{U_\pm})$ are independent of $\omega(0)$, it follows that 
\begin{equation}\label{eq:beta}
\beta_{U_\pm}=-\sum_{e \in V} \E\left( \frac{P_{e,\omega}(H_0 > T_{U_\pm})}{P^2_{0,\bar{\omega}_0}(\widetilde{H}_0 > T_{U_\pm})}\right) \E((\widetilde{d}(0)\cdot e_1)\widetilde{\delta}(0,e) ).
\end{equation} Furthermore, due to our assumption (K2), the leftmost expectation in \eqref{eq:beta} is constant over all $e \in V$ with $e \cdot e_1 = 0$ so that, since $\sum_{e\cdot e_1=0} \widetilde{\delta}(0,e)=-(\widetilde{\delta}(0,e_1)+\widetilde{\delta}(0,-e_1))$, $\beta_{U_\pm}$ becomes
\begin{align*}
\beta_{U_\pm} =& 
-\sum_{e\cdot e_1 =\pm 1}\E\left( \frac{P_{e,\omega}(H_0 > T_{U_\pm})}{P^2_{0,\bar{\omega}_0}(\widetilde{H}_0 > T_{U_\pm})}\right) \E((\widetilde{d}(0)\cdot e_1)\widetilde{\delta}(0,e) )\\
&+(\text{Var}_{\mu}(\omega(0,e_1))-\text{Var}_{\mu}(\omega(0,-e_1)))\E\left( \frac{P_{e_2,\omega}(H_0 > T_{U_\pm})}{P^2_{0,\bar{\omega}_0}(\widetilde{H}_0 > T_{U_\pm})}\right).
\end{align*} By virtue of (K3), the second term in this expression for $\beta_{U_\pm}$ vanishes and so we are left with
$$
\beta_{U_\pm}= (\text{Var}_{\mu}(\omega(0,e_1))-\text{Cov}_{\mu}(\omega(0,e_1),\omega(0,-e_1)))\E\left( \frac{P_{e_1,\omega}(H_0 \leq T_{U_\pm})-P_{-e_1,\omega}(H_0 \leq T_{U_\pm})}{P^2_{0,\bar{\omega}_0}(\widetilde{H}_0 > T_{U_\pm})}\right).
$$ In particular, it now follows from assumption (K4) and uniform ellipticity that
\begin{equation}\label{eq:beta1}
\beta_{U_+} \geq \frac{\rho}{4d}\sigma^2 \hspace{1cm}\text{ and }\hspace{1cm}\beta_{U_-} \leq -\frac{\rho}{4d}\sigma^2.
\end{equation} On the other hand, using \eqref{eq:gamma} and uniform ellipticity we obtain the bound
$$
|\gamma_{U_\pm}| \leq 8\epsilon\E\left(\left( 4d \sum_{e \in V} |\widetilde{\delta}(0,e)|\right)^2\right) \leq 8\epsilon
(4d)^2(2d) \E\|\widetilde{\delta}(0)\|_2^2 = 256\epsilon d^3\sigma^2,
$$ where for the second inequality above we have used the Cauchy-Schwarz inequality.  In particular, if $\epsilon \leq \frac{\rho}{2048d^4}$ then by our assumption (K5) we conclude that
$$
\E(g_{U_+}(0,0,\cdot)(\vec{d}\cdot e_1)(0,\cdot)) \geq \left(\frac{\rho}{4d}-256\epsilon d^3\right) \sigma^2 \geq \frac{\rho}{8d}\sigma^2 > 0
$$ and 
$$ 
\E(g_{U_-}(0,0,\cdot)(\vec{d}\cdot e_1)(0,\cdot))\leq -\left(\frac{\rho}{4d}-256\epsilon d^3\right) \sigma^2 + 4d\lambda \leq -\frac{\rho}{8d}\sigma^2 +4d\lambda< 0
$$ so that \eqref{eq:desig}, and ultimately \eqref{eq:drifco2}, now follows.

Finally, \eqref{eq:drifco} is a direct consequence of the fact that
$$
\E(g_{U_\pm}(0,0,\omega)\omega(0,e_i))=\E(g_{U_\pm}(0,\omega)\omega(0,-e_i))
$$ for all $i=2,\dots,d$ by assumption (K2), since the mapping $\omega \mapsto g_{U_\pm}(0,0,\omega)=(P_{0,\omega}(\widetilde{H}_0> T_{U_\pm}))^{-1}$ is invariant under the transformations $\omega \mapsto T_i\omega:=(\omega(x,T_i(e)))_{x \in \Z^d\,,\,e \in V}$ where, for $i=2,\dots,d$, $T_i$ is any rotation in $d \geq 3$ (or symmetry in $d=2$) preserving $\Z^d$ and $e_1$, and mapping $e_i$ to $-e_i$. This concludes the proof of Theorem \ref{theo:nokal}.




\end{document}